\def\cal{\mathcal}
\def\ge{\geqslant}\def\le{\leqslant}
\def\~{\widetilde}
\newtheorem{theorem}{Theorem}[section]
\newtheorem{lemma}{Lemma}[section]
\newtheorem{corollary}{Corollary}[section]
\def\cal{\mathcal}
\def\ge{\geqslant}
\def\le{\leqslant}
\def\~{\widetilde}
\begin{document}
\title[Orthogonal polynomials on the circle for the weight $w$ satisfying conditions $ w,w^{-1}\in {\rm
BMO}$
 ]{Orthogonal polynomials on the circle for the weight $w$ satisfying conditions $w,w^{-1}\in {\rm
BMO}$ }
\author{ S. Denisov, K. Rush }
\address{
\begin{flushleft}
Sergey Denisov: denissov@wisc.edu\\\vspace{0.1cm}
University of Wisconsin--Madison\\  Mathematics Department\\
480 Lincoln Dr., Madison, WI, 53706,
USA\vspace{0.1cm}\\and\\\vspace{0.1cm}
Keldysh Institute for Applied Mathematics, Russian Academy of Sciences\\
Miusskaya pl. 4, 125047 Moscow, RUSSIA\\
\vspace{1cm} Keith Rush: jkrush@wisc.edu
\\ \vspace{0.1cm}
University of Wisconsin--Madison\\  Mathematics Department\\
480 Lincoln Dr., Madison, WI, 53706, USA
\end{flushleft}
}\maketitle

\begin{abstract}
For the weight $w$ satisfying $w,w^{-1}\in {\rm BMO}(\mathbb{T})$,
we prove the asymptotics of $\{\Phi_n(e^{i\theta},w)\}$ in
$L^p[-\pi,\pi], 2\le p<p_0$ where $\{\Phi_n(z,w)\}$ are monic
polynomials orthogonal with respect to $w$ on the unit circle
$\mathbb{T}$.  Immediate applications include the estimates on the
uniform norm and asymptotics of the  polynomial entropies. The
estimates on higher order commutators between the Calderon-Zygmund
operators and BMO functions play the key role in the proofs of main
results.

{\it MSC AMS classification: 42C05, 33D45; key words: orthogonal
polynomials, weight, bounded mean oscillation.}

\end{abstract}\vspace{1cm}
\section{Introduction}

Let $\sigma$ be a probability measure on the unit circle. Define the
monic orthogonal polynomials $\{\Phi_n(z,\sigma)\}$ by requiring
\[
\deg \Phi_n=n, \,{\rm
coeff}(\Phi_n,n)=1,\,\int_{-\pi}^{\pi}\Phi_n(e^{i\theta},\sigma)\overline{\Phi_m(e^{i\theta},\sigma)}\,d\sigma\,=0\;,\quad
m<n,
\]
where ${\rm coeff}(Q,j)$ denotes the coefficient in front of $z^j$
in the polynomial $Q$. We can also define the orthonormal
polynomials  by the formula
\[
\phi_n(z,\sigma)=\frac{\Phi_n(z,\sigma)}{\|\Phi_n(e^{i\theta},\sigma)\|_{L^2_\sigma}}
\]
Later, we will need to use the following notation: for every
polynomial $Q_n(z)=q_nz^n+\ldots+q_0$ of degree at most $n$, we
introduce the $(\ast)$--operation:
\[
Q_n(z)\stackrel{(\ast)}{\longrightarrow} Q_n^*(z)=\bar{q}_0
z^n+\ldots+ \bar{q}_n
\]
This $(\ast)$ depends on $n$. In the paper, we use the shorthand
$\|f\|_p=\|f\|_{L^p(\mathbb{T})},
\|f\|_{L^p_w}=\left(\int_{\mathbb{T}}
|f(\theta)|^pwd\theta\right)^{1/p}$. $L^p$ stands for
$L^p(\mathbb{T})$ or $L^p[-\pi,\pi]$. The symbols $C, C_1$ are
reserved for absolute constants which value can change from one
formula to another.

The current paper is mainly motivated by two problems: the problem
of Steklov in the theory of orthogonal polynomials \cite{adt} and
the problem of the asymptotical behavior of the polynomial entropy
\cite{dk}.

The problem of Steklov \cite{1} consists in obtaining the sharp
estimates for $\|\phi_n(e^{i\theta},\sigma)\|_{L^\infty[-\pi,\pi]}$
assuming that the probability measure $\sigma$ satisfies $\sigma'\ge
\delta/(2\pi)$ a.e. on $[-\pi,\pi]$ and $\delta\in (0,1)$.  This
question attracted a lot of attention \cite{murman,Ger1,Ger2,Ger3,
Gol,rakh1,rakh2,2} and was recently resolved in \cite{adt}. In
particular, the following stronger result was proved
\begin{theorem}[\cite{adt}] Assume that the measure is given by the weight $w$: $d\sigma=wd\theta$. Let $p\in [1,\infty)$ and $C>C_0(p,\delta)$, then

\[
C_1(p,\delta)\sqrt n\le \sup_{w\ge \delta/(2\pi), \|w\|_1=1,
\|w\|_p\le C} \|\phi_n(e^{i\theta},w)\|_\infty\le C_2(p,\delta)\sqrt
n
\]
\end{theorem}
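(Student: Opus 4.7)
The statement asks for matching upper and lower bounds, both of order $\sqrt n$, on the supremum sup-norm over the Steklov--$L^p$ class of weights.

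\textbf{Upper bound.} This direction is standard and rests on the Christoffel function
\[
\lambda_n(\theta)=\Bigl(\sum_{k=0}^{n}|\phi_k(e^{i\theta},w)|^2\Bigr)^{-1}=\min_{\deg P\le n,\,P(e^{i\theta})=1}\int_{-\pi}^{\pi}|P(e^{it})|^2 w(t)\,dt.
\]
The pointwise hypothesis $w\ge\delta/(2\pi)$ yields $\lambda_n(\theta)\ge(\delta/(2\pi))\,\lambda_n^{\mathrm{Leb}}(\theta)=\delta/(n+1)$, since the extremal problem for Lebesgue measure is solved by the normalized Dirichlet kernel with minimum value $2\pi/(n+1)$. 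Hence
\[
|\phi_n(e^{i\theta})|^2\le\sum_{k=0}^{n}|\phi_k(e^{i\theta})|^2=1/\lambda_n(\theta)\le(n+1)/\delta,
\]
which gives $\|\phi_n\|_\infty\le C_2(\delta)\sqrt{n}$. Neither the probability normalization $\|w\|_1=1$ nor the $L^p$ control is needed for this direction.

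\textbf{Lower bound.} The hard direction requires, for each $n$, an explicit construction of an admissible weight $w=w_n$ for which $\|\phi_n(\cdot,w_n)\|_\infty\gtrsim\sqrt n$. I would work through Szeg\H o's recursion,
\[
\phi_{k+1}(z)=z\phi_k(z)-\bar\alpha_k\phi_k^*(z),\qquad\phi_{k+1}^*(z)=\phi_k^*(z)-\alpha_k z\phi_k(z),
\]
choosing the first $n$ Verblunsky parameters $\{\alpha_k\}_{k<n}$ so that the iterates interfere constructively at a chosen point $z_0=e^{i\theta_0}\in\mathbb T$, and setting $\alpha_k=0$ for $k\ge n$ so that $d\sigma$ is a Bernstein--Szeg\H o measure whose density is explicit in terms of $\phi_n^*$. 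A natural trial class is a sparse, phase-locked sequence of coefficients of modest amplitude: the total $\ell^2$ energy is kept small enough to guarantee $w\ge\delta/(2\pi)$, while the synchronized phases accumulate in the recursion and force $|\phi_n(z_0)|\sim\sqrt n$.

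\textbf{Main obstacle.} The real work is in simultaneously balancing three competing constraints. The Steklov lower bound $w\ge\delta/(2\pi)$ quantitatively caps how large the $|\alpha_k|$ can be; the condition $\|w\|_p\le C$ further restricts the peakedness of the Bernstein--Szeg\H o density and is what determines the admissible threshold $C_0(p,\delta)$; and one must still extract enough constructive interference from the recursion to push $|\phi_n(z_0)|$ up to the full scale $\sqrt n$ (as opposed to the $\sqrt{\log n}$-type growth produced by the earlier Rakhmanov-type examples). Negotiating these three constraints with sharp constants is the step I expect to be the principal technical difficulty.
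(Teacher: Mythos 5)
This theorem is quoted from \cite{adt} and is not proved in the present paper, so there is no internal proof to compare against; I will review your sketch on its own merits.

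Your upper bound is correct and is the standard Christoffel--function argument: since $w\ge\delta/(2\pi)$ pointwise, the extremal problem defining $\lambda_n(\theta)$ is minorized by $\delta/(2\pi)$ times the corresponding Lebesgue problem, whose value is $2\pi/(n+1)$ (attained by the normalized Dirichlet kernel), so $\lambda_n(\theta)\ge\delta/(n+1)$ and hence $|\phi_n(e^{i\theta})|^2\le\sum_{k\le n}|\phi_k|^2=\lambda_n(\theta)^{-1}\le(n+1)/\delta$. You are also right that this direction uses neither the normalization $\|w\|_1=1$ nor the $L^p$ bound.

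The lower bound sketch, however, contains a fatal flaw rather than just unfinished work. You propose to set $\alpha_k=0$ for $k\ge n$ so that $d\sigma$ is a Bernstein--Szeg\H o measure; but for such a measure $w=\dfrac{1}{2\pi|\phi_n^*(e^{i\theta})|^2}$, and on the unit circle one has the identity $|\phi_n(e^{i\theta})|=|\phi_n^*(e^{i\theta})|$ for every $\theta$ (since $\phi_n^*(e^{i\theta})=e^{in\theta}\overline{\phi_n(e^{i\theta})}$). Consequently, the Steklov condition $w\ge\delta/(2\pi)$ forces $|\phi_n^*(e^{i\theta})|^2\le\delta^{-1}$ everywhere, hence $\|\phi_n\|_\infty=\|\phi_n^*\|_\infty\le\delta^{-1/2}$, which is bounded independently of $n$. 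So no choice of the first $n$ Verblunsky parameters, phase-locked or otherwise, can produce growth of order $\sqrt n$ (or any growth at all) within the Bernstein--Szeg\H o class: that class is structurally excluded by the Steklov constraint. The construction in \cite{adt} is therefore of a genuinely different type; it does not parametrize the weight by a finite Verblunsky sequence, and the core of that paper is precisely the design of a non--Bernstein--Szeg\H o weight, obtained via a fixed-point/decoupling argument involving the Hilbert transform, for which $\phi_n^*$ can peak while $w$ stays both bounded below and controlled in $L^p$. This is the missing idea in your proposal, not merely the sharp bookkeeping of constants.
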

{\bf Remark.} If the measure $\sigma$ satisfies the Szeg\H{o}
condition \cite{szego}
\begin{equation}\label{szc}
\int_{-\pi}^\pi \log \sigma'(\theta)d\theta>-\infty
\end{equation}
then $\|\Phi_n\|_{L^2_\sigma}\sim 1$ and the polynomials $\phi_n$
and $\Phi_n$ are of the same size. In particular, $\phi_n$ can be
replaced by $\Phi_n$ in the previous Theorem.

{\bf Remark.}  In the formulation of the Steklov problem, the
normalization that $\sigma$ is a probability measure, i.e.,
\[
\int_{-\pi}^\pi d\sigma=1
\]
is not restrictive because of the following scalings: $
\phi_n(z,\sigma)=\alpha^{1/2} \phi_n(z,\alpha\sigma)$ and
$\Phi_n(z,\alpha\sigma)=\Phi_n(z,\sigma),\,\alpha>0$.

The previous Theorem handles all $p<\infty$ but not the case
$p=\infty$. That turns out to be essential: if the weight $w$ is
bounded, we get an improvement in the exponent.

\begin{theorem}(\cite{dn}, Denisov-Nazarov)
If $T\gg 1$, we have
\[
 \sup_{1\le w\le T}
\|\Phi_n(e^{i\theta},w)\|_{p_0}\le C(T), \,
p_0(T)=2+\frac{C_1}{T},\quad \sup_{1\le w\le T}
\|\Phi_n(e^{i\theta},w)\|_\infty\le C(T)n^{\frac 12-\frac{C}{T}}
\]
and, if $0<\epsilon\ll 1$,
\[
\sup_{1\le w\le 1+\epsilon} \|\Phi_n(e^{i\theta},w)\|_{p_0}\le
C(\epsilon), \, p_0(\epsilon)=\frac{C_2}{\epsilon}, \quad \sup_{1\le
w\le 1+\epsilon} \|\Phi_n(e^{i\theta},w)\|_\infty\le
C(\epsilon)n^{C\epsilon}
\]
\end{theorem}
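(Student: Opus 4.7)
The plan is to combine three ingredients: the Szeg\H{o} function representation of $\Phi_n^*$, a quantitative weighted $L^p$ bound for the Riesz projection, and a Nikolskii-type polynomial inequality converting $L^{p_0}$ control into $L^\infty$ control at the cost of a factor $n^{1/p_0}$.

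First, I would let $D(z)=D(z,w)$ be the Szeg\H{o} outer function with $|D(e^{i\theta})|^2=w(\theta)$; the hypothesis $1\le w\le T$ forces $\|D^{\pm 1}\|_\infty\le T^{1/2}$. The orthogonality $\int \Phi_n\bar z^k w\,d\theta=0$ for $0\le k\le n-1$ rewrites as the statement that the Fourier coefficients of $\Phi_n^*\cdot w$ vanish for $k=1,\ldots,n$. Together with Szeg\H{o}'s theorem $\|\Phi_n\|_{L^2_w}^2\to D(0)^2$, this identifies the error $\rho_n:=\Phi_n^*-D(0)^2/D$ with the image of a fixed $H^2$ function (essentially $1/D$) under a truncated Riesz projection of order $n$.

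Second, and this is the main step, I would establish an $L^{p_0}_w$ bound on $\rho_n$ for some $p_0>2$ depending on $T$. The bound $1\le w\le T$ gives $[w]_{A_2}\le T$, and the self-improvement (reverse H\"older) property of $A_p$ weights extends this to $w\in A_{p_0/2}$ with $p_0-2\gtrsim 1/T$. Quantitative weighted bounds for the Riesz projection then force $\|\rho_n\|_{L^{p_0}_w}\le C(T)$ uniformly in $n$, and since $w\ge 1$ this also yields $\|\Phi_n\|_{p_0}\le \|\Phi_n^*\|_{L^{p_0}_w}\le C(T)$.

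Third, I would apply the polynomial Nikolskii inequality $\|P_n\|_\infty\le Cn^{1/p_0}\|P_n\|_{p_0}$ to obtain $\|\Phi_n\|_\infty\le C(T)\,n^{1/p_0(T)}$; substituting $p_0=2+C_1/T$ gives $1/p_0=1/2-C/T+O(T^{-2})$, which is the first pair of bounds. The perturbative case $1\le w\le 1+\epsilon$ follows from exactly the same scheme, but now $[w]_{A_p}=1+O(\epsilon)$ for every $p>1$, so the admissible range of $p_0$ extends to $\sim 1/\epsilon$ and one lands at $\|\Phi_n\|_\infty\lesssim n^{C\epsilon}$. The main obstacle is the weighted step: the $L^{p_0}_w$ operator norm of the truncated Riesz projection must be controlled quantitatively enough that the admissible window $p_0-2$ scales like $1/T$ (and dually like an absolute constant in the small-$\epsilon$ regime), which forces one to keep explicit track of reverse-H\"older constants rather than rely on qualitative $A_p$-boundedness.
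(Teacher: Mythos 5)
The paper does not prove this theorem --- it is cited from \cite{dn} --- but identity \eqref{odin} of the present paper, $\Phi_n^*=1+P_{[1,n]}\bigl((1-\alpha w)\Phi_n^*\bigr)$, which the authors say ``was used in \cite{dn} recently,'' reveals the actual mechanism: for $1\le w\le T$ one takes $\alpha=2/(T+1)$ so that $\|1-\alpha w\|_\infty\le (T-1)/(T+1)<1$, then uses the bound $\|P_{[1,n]}\|_{p,p}\le 1+C(p-2)$ (Lemma~\ref{hilb}) to make the operator $f\mapsto P_{[1,n]}\bigl((1-\alpha w)f\bigr)$ a contraction on $L^p$ for $p-2\lesssim 1/T$, and solves the fixed-point equation by a Neumann series that is uniformly bounded in $n$. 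Nikolskii then gives the $L^\infty$ bound, and the $\epsilon$--regime is the same calculation with $\alpha=2/(2+\epsilon)$.

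Your Step~1 contains the essential gap. You assert that $\rho_n:=\Phi_n^*-D(0)^2/D$ is ``the image of a fixed $H^2$ function (essentially $1/D$) under a truncated Riesz projection of order $n$,'' but this is not an identity. What \emph{is} true is that $\phi_n^*(z)\overline{\phi_n^*(0)}$, being the reproducing kernel of $P_n\subset H^2_w$ at the origin, equals the $L^2_w$-\emph{orthogonal} projection of the full-space reproducing kernel $1/(D(0)D(z))$ onto $P_n$. That weighted projection $\Pi_n^w$ is not the Fourier truncation $P_{[0,n]}$; the discrepancy between them is precisely the object being estimated, and passing from the weighted to the unweighted projection \emph{is} the theorem. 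In other words you have silently replaced the implicit equation $P_{[1,n]}(w\Phi_n^*)=0$ (which determines $\Phi_n^*$ only through a contraction argument) with an explicit closed-form formula that does not hold for non-constant $w$. Once this is fixed by working with the fixed-point identity and Neumann series, your Steps~2--3 are in the right spirit, but they are also heavier than needed: with $1\le w\le T$ there is no call for $A_p$ characteristics or reverse-H\"older self-improvement --- the trivial pointwise bound $\|1-\alpha w\|_\infty\le(T-1)/(T+1)$ together with the unweighted operator bound on $P_{[1,n]}$ already produces the exponent $p_0=2+C_1/T$ (and the dual computation with $\alpha=2/(2+\epsilon)$ gives $p_0\sim\epsilon^{-1}$). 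Your final Nikolskii step is correct and is exactly what the paper uses via inequality \eqref{nik}.
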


The uniform bound on the $L^p$ norm suggests that maybe a stronger
result on the asymptotical behavior is true.  It is well-known that
for $\sigma$ in the Szeg\H{o} class (i.e., \eqref{szc} holds), the
following asymptotics is valid \cite{5}
\begin{equation}\label{asu}
\phi_n^*(e^{i\theta},\sigma)\stackrel{(*)}{\longrightarrow}
S(e^{i\theta},\sigma), \quad \int_{-\pi}^\pi
\left|\frac{\phi_n^*(e^{i\theta},\sigma)}{S(e^{i\theta},\sigma)}-1\right|^2d\theta\to
0, \quad n\to\infty
\end{equation}
where $\stackrel{(*)}{\longrightarrow}$ refers to weak-star
convergence and $S(z,\sigma)$ is the Szeg\H{o} function, i.e., the
outer function in $\mathbb{D}$ which satisfies
$|S(e^{i\theta},\sigma)|^{-2}=2\pi \sigma'(\theta), S(0,\sigma)>0$.
In particular, if $\sigma'\ge (2\pi)^{-1}\delta$, then
$\|\phi_n^*-S\|_2\to 0$. Recall that
$\phi_n(z,\sigma)=z^n\overline{\phi^*_n(z,\sigma)}, z\in
\mathbb{T}$.

The results stated above give rise to three questions: (a) What
upper estimate can we get assuming $w\in {\rm BMO}(\mathbb{T})$
\cite{stein} instead of $w\in L^\infty(\mathbb{T})$? Recall that
$L^\infty(\mathbb{T})\subset {\rm BMO}(\mathbb{T})\subset
\cap_{p<\infty}L^p(\mathbb{T})$. (b) Is it possible to relax the
Steklov condition $w\ge 1$? (c) Can one obtain an asymptotics of
$\{\phi_n^*\}$ in $L^p$ classes with $p>2$?\medskip

 The partial answers to these questions are
contained in the following
 Theorems which are the main results of the paper. We start with a
 comment about some notation. If $\alpha$ is a positive parameter, we write $\alpha\ll 1$ to
indicate the following: there is an absolute constant $\alpha_0$
(sufficiently small) such that $\alpha<\alpha_0$. Similarly, we
write $\alpha\gg 1$ as a substitute for: there is a constant
$\alpha_0$ (sufficiently large) so that $\alpha>\alpha_0$. The
symbol $\alpha_1\ll \alpha_2$ ($\alpha_1\gg \alpha_2$) will mean
$\alpha_1/\alpha_2\ll 1$ ($\alpha_1/ \alpha_2\gg 1$).

\begin{theorem}\label{th4}
If $w: \|w^{-1}\|_{BMO}\le s, \|w\|_{BMO}\le t$, then there is
$\Pi\in L^{p_0}[-\pi,\pi], p_0>2$ such that
\[
\lim_{n\to\infty} \|\Phi_n^*-\Pi\|_{p_0}=0
\]
and we have for $p_0$:
\begin{equation}\label{p}
p_0=\left\{
\begin{array}{cc}
2+\displaystyle \frac{C_1}{(st)\log^2 (st)}, & {\rm if }\,st\gg 1\\
\displaystyle \frac{C_2}{(st)^{1/4}}, & {\rm if}\, 0<st\ll 1
\end{array}
\right.
\end{equation}
We also have the bound for the uniform norm
\begin{equation}\label{nik}
\|\Phi^*_n\|_{\infty}\le C_{(st)}n^{1/p_0}
\end{equation}
where $C_{(u)}$ denotes a function of $u$.
\end{theorem}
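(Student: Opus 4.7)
I take as the candidate limit $\Pi(z) = D(0)/D(z)$, where $D$ is the Szeg\H{o} outer function of $w$ (with $|D(e^{i\theta})|^2=w(\theta)$ and $D(0)>0$). Under the hypothesis $w, w^{-1} \in \mathrm{BMO}$, John--Nirenberg guarantees $w, w^{-1} \in L^q$ for every $q<\infty$, so $D^{\pm 1}\in H^q$ and $\Pi\in L^q(\mathbb{T})$ for every $q<\infty$. A direct computation gives $\Pi w = D(0)\bar D$, hence $\widehat{\Pi w}(k) = 0$ for all $k\ge 1$---the infinite-frequency analogue of the orthogonality $\widehat{\Phi_n^* w}(k) = 0$ for $k=1,\dots,n$. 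By extremality ($\Phi_n^*$ and $\Pi$ both minimize $\|P\|_{L^2_w}^2$ subject to $P(0)=1$, over polynomials of degree $\le n$ and over $H^q$ respectively), Szeg\H{o}'s theorem yields $\|\Phi_n^*\|_{L^2_w}^2\to\|\Pi\|_{L^2_w}^2=2\pi D(0)^2$; in the Hilbert space $L^2_w$, norm convergence combined with the weak convergence $\Phi_n^*\rightharpoonup\Pi$ (from uniqueness of the minimizer) promotes this to strong convergence $\|\Phi_n^*-\Pi\|_{L^2_w}\to 0$. Since $w^{-1}\in L^q$ for every $q<\infty$, H\"older converts this into $\|\Phi_n^*-\Pi\|_{L^r}\to 0$ for every $r<2$.

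The remaining principal task is to establish a uniform bound $\sup_n\|\Phi_n^*\|_{L^{p_1}}\le C_{(st)}$ for some $p_1>p_0$; log-convexity of $L^p$-norms then interpolates this with the $L^r$-convergence to yield the desired $\|\Phi_n^*-\Pi\|_{L^{p_0}}\to 0$. To obtain the uniform bound I exploit the factorization $w=D\bar D$. Setting $F_n=\Phi_n^* D\in H^q$ and using the Toeplitz inversion $T_w^{-1}g=D^{-1}P_+(g/\bar D)$ valid on the full Hardy space, the orthogonality $P_{[1,n]}(\Phi_n^* w)=0$ leads to the fixed-point equation
\[
\Phi_n^* - \Pi \;=\; \frac{\kappa_n - 2\pi D(0)^2}{D(0)\,D} \;+\; M_{D^{-1}}\,P_+\,M_{1/\bar D}\,P_{>n}\,M_w\,(\Phi_n^*-\Pi),
\]
where $\kappa_n=\|\Phi_n\|_{L^2_w}^2$. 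Iterating produces a Neumann series for $\Phi_n^*-\Pi$; after substituting $D^{\pm 1}=\exp(\pm\tfrac{1}{2}(\log w+i\widetilde{\log w}))$, each iterate unfolds into a linear combination of higher-order commutators of the Riesz projection $P_+$ with the $\mathrm{BMO}$-multiplier $\log w$. The sharp iterated commutator estimates
\[
\|[M_b,P_+]^{(k)}\|_{L^p\to L^p} \le C_{p,k}\,\|b\|_{\mathrm{BMO}}^k, \qquad b\in\mathrm{BMO},
\]
with careful tracking of $C_{p,k}$ as $p\to 2^+$ and as $k\to\infty$, allow the series to be summed in $L^{p_1}$ for all $p_1$ in the range prescribed by \eqref{p}.

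The chief obstacle is the quantitative dependence on $st$. The classical Coifman--Rochberg--Weiss bound handles only the case $k=1$, so for the $k$-fold iteration one must balance the combinatorial growth against the BMO-smallness $\|\log w\|_{\mathrm{BMO}}^k$ (controlled by $s$ and $t$ via John--Nirenberg) and against the blow-up of $C_{p,k}$ as $p\to 2^+$. In the large-$st$ regime the logarithmic behavior of $\|\log w\|_{\mathrm{BMO}}$ combined with an additional logarithmic loss per iteration produces the $(st)\log^2(st)$ denominator in \eqref{p}; in the small-$st$ regime the series is nearly geometric and the scaling $(st)^{1/4}$ emerges. The sup-norm bound \eqref{nik} follows immediately from the uniform $L^{p_0}$ bound $\|\Phi_n^*\|_{L^{p_0}}\le C_{(st)}$ and Nikol'skii's inequality $\|Q\|_\infty\le C\,n^{1/p_0}\|Q\|_{L^{p_0}}$ applied to $Q=\Phi_n^*$ (a trigonometric polynomial of degree $\le n$).
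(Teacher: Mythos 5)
Your proposal takes a genuinely different route from the paper, but the central quantitative step is left unproven in a way that cannot be dismissed as routine.

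Your framework (identify $\Pi$ via the Szeg\H{o} outer function, get $L^2_w$ strong convergence from extremality, H\"older down to $L^r$, $r<2$, and then interpolate against a \emph{uniform} $L^{p_1}$ bound with $p_1>p_0$) is clean and would work as a reduction. The first half is sound: weak convergence plus convergence of norms gives strong $L^2_w$ convergence, and since $w^{-1}\in \cap_{q<\infty}L^q$, you indeed get $\|\Phi_n^*-\Pi\|_{L^r}\to 0$ for all $r<2$. The problem is that the uniform bound $\sup_n\|\Phi_n^*\|_{L^{p_1}}\lesssim_{st}1$, which carries all the quantitative content of Theorem~\ref{th4}, is asserted, not proved. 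Three issues: \textbf{(i)} you switch from the hypotheses $\|w\|_{\rm BMO}\le t,\;\|w^{-1}\|_{\rm BMO}\le s$ to working with $\|\log w\|_{\rm BMO}$; these are different conditions, and the passage from $(s,t)$ to $\|\log w\|_{\rm BMO}$ (which is required to reproduce the exponents in \eqref{p}) is nontrivial and is not carried out. \textbf{(ii)} The claim that iterating
\[
\Phi_n^*-\Pi=(\ldots)+M_{D^{-1}}P_+M_{1/\bar D}P_{>n}M_w(\Phi_n^*-\Pi)
\]
and substituting $D^{\pm1}=\exp\bigl(\pm\tfrac12(\log w+i\widetilde{\log w})\bigr)$ ``unfolds into a linear combination of higher-order commutators of $P_+$ with $\log w$'' is not substantiated; the multipliers are exponentials of a BMO function and its conjugate, and no algebraic identity producing only iterated commutators $[M_{\log w},P_+]^{(k)}$ is given. \textbf{(iii)} Even granting such an expansion, the constants $C_{p,k}$ are the entire story: you acknowledge that ``careful tracking'' is needed, but this tracking is precisely what yields the denominators $(st)\log^2(st)$ and $(st)^{1/4}$, and it is not done.

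For contrast, the paper never passes to $\log w$. It works with the iterated commutators ${\bf C}_l$ of $w$ itself (and $\widetilde{\bf C}_l$ of $w^{-1}$) with $P_{[1,n]}$, proves the quantitative bound $\|{\bf C}_j\|_{p,p}\le (Cjt)^j$ (Lemma~\ref{al1}) by a contour-integral argument for the operator-valued function $F(z)=e^{zw}P_{[1,n]}e^{-zw}$ (note: exponential of $w$, not of $\log w$), and then makes the iteration into a Neumann series by a three-region splitting $\{w\le\epsilon\}\cup\{\epsilon<w<\Lambda\}\cup\{w\ge\Lambda\}$, applying $D_j$ on the small-$w$ region and $B_l$ on the large-$w$ region with powers $j,l$ optimized (Lemma~\ref{optim}); solving the resulting transcendental balance equations for $\epsilon,\Lambda$ produces the $\log^2(st)$ factor. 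Your reduction-by-interpolation is an attractive simplification of the endgame, but to complete the proof you would need either to carry out the commutator bookkeeping for $\log w$ with explicit constants and relate $\|\log w\|_{\rm BMO}$ to $s,t$, or to fall back on an argument like the paper's that avoids $\log w$ entirely.
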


In the case when an additional information is given, e.g., $w\in
L^\infty$ or $w^{-1}\in L^\infty$, this result can be improved.

\begin{theorem}\label{th1} Under the conditions of the previous
Theorem, we have
\begin{itemize}
\item{If $w\ge 1$, then $p_0$ can be taken as
\[
p_0=\left\{
\begin{array}{cc}
\displaystyle 2+\frac{C_1}{t\log t}, &\,{\rm if }\, t\gg 1\\
\displaystyle \frac{C_2}{\sqrt t}, &\,{\rm if}\, 0<t\ll 1
\end{array}
\right.
\]}

\item{If $w\le 1$, then we have
\[
p_0=\left\{
\begin{array}{cc}
\displaystyle 2+\frac{C_1}{s\log s}, &\,{\rm if }\, s\gg 1\\
\displaystyle \frac{C_2}{\sqrt s}, &\,{\rm if}\, 0<s\ll 1
\end{array}
\right.
\]}
We also have the bound for the uniform norm
\begin{equation}\label{nik1}
\|\Phi^*_n\|_{\infty}\le C_{(t,s)}n^{1/p_0}
\end{equation}
where $C_{(t,s)}$ depends on $t$ or $s$.
\end{itemize}

\end{theorem}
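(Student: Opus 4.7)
The plan is to follow the same strategy developed for Theorem \ref{th4} in the preceding sections of the paper, but with a refinement at the steps where $\|w^{-1}\|_{BMO}$ (respectively $\|w\|_{BMO}$) was used. By the duality $w \mapsto w^{-1}$ and the relationship between $\Phi_n$ for $w$ and $w^{-1}$, the two cases $w\geq 1$ and $w\leq 1$ are analogous, so I would concentrate on $w\geq 1$. The target limit is $\Pi = D(0)/D$, where $D$ is the Szeg\H{o} outer function of $w$ (so $|D|^2=w$ on $\mathbb{T}$ and $D(0)>0$). Under $w\ge 1$ one has $|D|\ge 1$ and $|\Pi|\le 1$, so the candidate limit is a priori in $L^\infty$.

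First I would recall the representation of $\Phi_n^*-\Pi$ established in the proof of Theorem \ref{th4}: a Neumann series whose terms are iterated commutators of the Riesz projection $P_+$ with a function $b$ determined by $\log w$ (essentially $b = \arg D$ or $\log|D|$), applied to an appropriate initial datum. The bound on the $k$-th order commutator is the higher-order commutator estimate
\[
\bigl\|[P_+, b]^{k}\bigr\|_{L^p\to L^p}\le C^k \|b\|_{BMO}^k \cdot \varphi(p),
\]
where $\varphi(p)$ collects the usual CZ-dependence on $p$; convergence of the series at level $p_0$ yields the $L^{p_0}$-asymptotics. In the general (two-sided) setting used for Theorem \ref{th4}, a John--Nirenberg-type passage is needed \emph{twice} (once for $w$ and once for $w^{-1}$), each costing a logarithmic factor; this accounts for the $\log^2(st)$ in \eqref{p}.

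The key refinement is that under $w\ge 1$ we have $w^{-1}\le 1$ automatically and, in particular, $\|\log w^{-1}\|_{L^\infty}$ is controlled pointwise by $\log w$, while the auxiliary functions that previously required John--Nirenberg on $w^{-1}$ can now be estimated in $L^\infty$. Substituting this $L^\infty$-bound in place of one BMO-to-$L^p$ step inside the iterated commutator expansion removes exactly one $\log$ from the expression for the radius of convergence. This yields $p_0=2+C_1/(t\log t)$ when $t\gg 1$ and, in the small-BMO regime, turns $(st)^{1/4}$ into $\sqrt{t}$, matching \eqref{p} of Theorem \ref{th1}. The Nikolskii inequality $\|Q\|_{\infty}\le C n^{1/p_0}\|Q\|_{p_0}$ for polynomials $Q$ of degree $n$, combined with the uniform bound on $\|\Phi_n^*\|_{p_0}$ coming from the proved convergence, immediately yields \eqref{nik1}.

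The main obstacle is the bookkeeping in Step 2: identifying precisely the factor(s) inside the multilinear commutator expansion where $\|w^{-1}\|_{BMO}$ entered in the proof of Theorem \ref{th4} and showing that these slots can be replaced by $\|w^{-1}\|_{\infty}\le 1$ without disturbing the convergence of the remaining geometric series. A secondary issue is the case $w\le 1$, which does not quite follow by a naive swap because the distinguished role played by $P_+$ is not symmetric in $w$ and $w^{-1}$; I would either run the entire argument on the dual polynomials (using $\Phi_n(z,w^{-1})$) or, equivalently, replace $P_+$ by $P_-$ throughout and redo the estimates on the corresponding conjugate Szeg\H{o} function.
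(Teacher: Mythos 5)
Your global intuition is right---$w\ge1$ should let you discard the part of the argument that controls $w^{-1}$, dropping one logarithm---but the mechanism you describe does not match the proof of Theorem~\ref{th4}, and as written the plan would not close. The proof of Theorem~\ref{th4} for $st\gg1$ is not a Neumann series in iterated commutators of the Riesz projection with $\log w$. It splits the circle into three level sets of the weight itself, $\Omega_1=\{w\le\epsilon\}$, $\Omega_2=\{\epsilon<w<\Lambda\}$, $\Omega_3=\{w\ge\Lambda\}$, and the contraction $\mathcal{O}(n)=\mathcal{O}_1+\mathcal{O}_2+\mathcal{O}_3$ is built from the operators $D_j$ (iterated commutators with $w^{-1}$, used on $\Omega_1$) and $B_l$ (iterated commutators with $w$, used on $\Omega_3$). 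The $\log^2(st)$ comes from jointly optimizing the two thresholds $\epsilon,\Lambda$ against the two exponentially small errors $st\exp(-C/(\epsilon s))$ and $\exp(-C\Lambda/t)$. Under $w\ge1$ the paper simply sets $\epsilon=1/2$, so $\Omega_1=\emptyset$: the $D_j$-term and its $s$-dependence disappear entirely, leaving a one-parameter optimization in $\Lambda$ whose solution gives $\delta\sim(t\log t)^{-1}$. No ``slot'' inside a commutator expansion is being replaced by an $L^\infty$ bound---an entire summand vanishes. Also, the bound you quote, $\|[P_+,b]^k\|_{p,p}\le C^k\|b\|_{BMO}^k\varphi(p)$, is too optimistic: the paper's Lemma~\ref{al1} gives $(Cjt)^j$, and beating this factorial growth in $j$ is exactly what the optimization over $j,l$ is for.

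Two further points. For the regime $t\ll1$ the paper does not use the three-piece decomposition at all; it uses $\Phi_n^*=1+L_n\Phi_n^*$ with $L_n=w^{-1}[w,P_{[1,n]}]$, and since $w^{-1}\le1$ is bounded while Lemma~\ref{ce} gives $\|[w,P_{[1,n]}]\|_{p,p}\lesssim p^2t$, one has $\|L_n\|_{p,p}<1/2$ for $p\lesssim t^{-1/2}$. Your plan never explains how $(st)^{1/4}$ becomes $t^{1/2}$; it is this single-commutator estimate, not the $L^\infty$ substitution you propose, that produces it. Finally, your worry about asymmetry in the case $w\le1$ is a non-issue: no passage to $P_-$ or to dual polynomials is needed. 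The paper takes $\Lambda=1$ so that $\Omega_3=\emptyset$ (or uses the mirror identity $\Phi_n^*=1-[w^{-1},P_{[1,n]}](w\Phi_n^*)$ in the small-$s$ regime) and repeats the same one-parameter optimization in $\epsilon$.
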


{\bf Remark.} It is clear that the allowed exponent $p_0$ is
decaying in $s$ and $t$ so it can be chosen larger than $2$ for all
values of $s$ and $t$.

{\bf Remark.} As we have already mentioned, the following scaling
invariance holds: $\Phi_n(z,\sigma)=\Phi_n(z,\alpha\sigma),$
$\alpha>0$. The $\rm BMO$ norm is $1$-homogeneous, e.g., $\|\alpha
w\|_{BMO}=\alpha\|w\|_{BMO}$, so the estimates in the
Theorem~\ref{th4} are invariant under scaling $w\to\alpha
w$.\smallskip

In the case when $w=C$, we get $\|w\|_{BMO}=\|w^{-1}\|_{BMO}=0$ and,
although $\Phi_n^*(z,w)=1$, we can not say anything about the size
of $\phi_n^*(z,w)$. The next Lemma explains how our results can be
generalized to $\{\phi_n^*\}$.
\begin{lemma}\label{pa}
In the Theorem \ref{th4}, if one makes an additional assumption that
$\|w\|_1=1$, then  $\|\phi_n^*-S\|_{p_0}\to 0$ with $p_0$ given by
\eqref{p}.
\end{lemma}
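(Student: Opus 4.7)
The plan is to deduce Lemma~\ref{pa} from Theorem~\ref{th4} by controlling the scalar normalization $\kappa_n := \|\Phi_n\|_{L^2_w}$, so that $\phi_n^* = \Phi_n^*/\kappa_n$.

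First I would verify the Szeg\H{o} condition \eqref{szc}. Both $w$ and $w^{-1}$ lie in ${\rm BMO}(\mathbb{T})\subset L^1(\mathbb{T})$, and the elementary bounds $\log^+ x\le x$ and $\log^+(1/x)\le 1/x$ for $x>0$ give $\log w \in L^1(\mathbb{T})$. Hence the classical Szeg\H{o} theory applies: $\kappa_n \to c$ for some $c>0$, and rewriting $|\phi_n^*/S - 1|^2 = 2\pi|\phi_n^* - S|^2 w$, \eqref{asu} yields $\phi_n^* \to S$ in $L^2_w$.

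Next I would identify the $L^{p_0}$-limit $\Pi$ supplied by Theorem~\ref{th4}. Since $\Phi_n^* \to \Pi$ in $L^{p_0}$, pick a subsequence $\{n_k\}$ along which $\Phi_{n_k}^* \to \Pi$ almost everywhere on $\mathbb{T}$. Because $w>0$ a.e.\ ($w^{-1}\in L^1$), the $L^2_w$-convergence of $\phi_n^*$ to $S$ permits a further passage to a subsequence $\{n_{k_j}\}$ along which $\phi_{n_{k_j}}^* \to S$ a.e. Combining with $\Phi_n^* = \kappa_n \phi_n^*$ and $\kappa_n \to c$ yields $\Phi_{n_{k_j}}^* \to cS$ a.e., and therefore $\Pi = cS$.

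Finally, $S \in L^{p_0}(\mathbb{T})$ because $|S|^{p_0} = (2\pi w)^{-p_0/2}$ and $w^{-1}\in {\rm BMO} \subset L^{p_0/2}$. The triangle inequality then gives
\[
\|\phi_n^* - S\|_{p_0} \le \kappa_n^{-1}\|\Phi_n^* - cS\|_{p_0} + |1 - c\kappa_n^{-1}|\,\|S\|_{p_0} \longrightarrow 0,
\]
by Theorem~\ref{th4} and $\kappa_n\to c > 0$. The only mildly delicate step is the identification $\Pi = cS$, which is handled by the subsequence argument above; no new analytic tool beyond Theorem~\ref{th4} is required.
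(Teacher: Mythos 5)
Your proposal is correct and follows essentially the same route as the paper: verify the Szeg\H{o} condition, conclude $\kappa_n=\|\Phi_n\|_{L^2_w}\to c>0$ from classical theory, invoke Theorem~\ref{th4} for the $L^{p_0}$ limit of $\Phi_n^*$, and identify that limit with $cS$ via \eqref{asu}. There are two small technical variations worth noting. For the Szeg\H{o} condition, the paper invokes Lemma~\ref{balance}, whereas you derive $\log w\in L^1$ directly from the elementary pointwise bound $|\log w|\le \log^+ w+\log^+(w^{-1})\le w+w^{-1}$ together with ${\rm BMO}(\mathbb{T})\subset L^1(\mathbb{T})$; this is a clean shortcut that avoids the quantitative estimate in Lemma~\ref{balance}, which is not needed here. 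For the identification $\Pi=cS$, the paper observes that $\phi_n^*$ converges weak-star to $S$ by \eqref{asu} and in $L^{p_0}$ (hence weakly in $L^2$) to $\Pi/c$, so uniqueness of weak limits finishes it; your diagonal subsequence extraction to a.e.\ convergence reaches the same conclusion and is equally valid, if slightly heavier. The remainder of your argument (integrability of $S$ in $L^{p_0}$ and the triangle-inequality split of $\|\phi_n^*-S\|_{p_0}$) is correct and matches what the paper leaves implicit.
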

\begin{proof}
Indeed, Lemma \ref{balance} from Appendix shows that
\[
\int_{-\pi}^\pi \log wd\theta>-\infty
\]
and thus the sequence $\{\|\Phi_n\|_{2,w}\}$ has a finite positive
limit \cite{5,sim1}. Therefore, $\{\phi^*_n\} =
\left\{\frac{\Phi_n^*}{||\Phi_n||_{2, w}}\right\}$ has an $L^{p_0}$
limit by Theorem 1.3. By \eqref{asu}, $\{\phi_n^*\}$ converges
weakly to $S$ and therefore we have the statement of the Lemma with
$\Pi$ being a multiple of $S$.
\end{proof}

\medskip
The polynomial entropy is defined as
\[
E(n,\sigma)=\int_\mathbb T |\phi_n|^2\log|\phi_n|d\sigma
\]
where $\{\phi_n\}$ are orthonormal with respect to $\sigma$. In
recent years, a lot of efforts were made to understand the
asymptotics of $E(n,\sigma)$ \cite{ap1,ap2,ap3} as $n\to\infty$. In
\cite{dk}, the sharp lower and upper bounds were obtained for
$\sigma$ in the Szeg\H{o} class. In \cite{adt}, it was shown that
$E(n,w)$ can not exceed $C\log n$ if $w\ge 1$ and $w\in
L^p[-\pi,\pi],p<\infty$, and that this bound saturates. This leaves
us with very natural question: what are regularity assumptions on
$w$ that guarantee boundedness of $E(n,w)$? The following corollary
of Lemma \ref{pa} gives the partial answer.

\begin{corollary}\label{entr}If $w: w,w^{-1}\in {\rm BMO}(\mathbb{T})$ and $\|w\|_1=1$, then
\[
\lim_{n\to\infty} E(n,w)=-\frac{1}{4\pi}\int_{-\pi}^\pi \log(2\pi
w)d\theta
\]
\end{corollary}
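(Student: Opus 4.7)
The plan is to reduce the corollary to Lemma~\ref{pa} by rewriting $E(n,w)$ as an entropy of $g_n:=|\phi_n^*|^2w$ plus a linear term, then passing to the limit $g_n\to g:=|S|^2w=\tfrac{1}{2\pi}$ in each piece separately.

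First, using $|\phi_n(e^{i\theta})|=|\phi_n^*(e^{i\theta})|$ on $\mathbb T$ and $\log|\phi_n|=\tfrac12\log|\phi_n|^2$, I rewrite
\[
E(n,w)=\tfrac12\int_{-\pi}^\pi |\phi_n^*|^2 w\,\log|\phi_n^*|^2\,d\theta
=\tfrac12\int_{-\pi}^\pi g_n\log g_n\,d\theta-\tfrac12\int_{-\pi}^\pi g_n\log w\,d\theta,
\]
since $\log|\phi_n^*|^2=\log g_n-\log w$ (at zeros of $\phi_n^*$ the identity holds with the convention $0\cdot\log 0=0$). The assumption $\|w\|_1=1$ together with $w,w^{-1}\in\mathrm{BMO}$ puts us in the setting of Lemma~\ref{pa}, so $\phi_n^*\to S$ in $L^{p_0}$ with $p_0>2$, and $|S|^{-2}=2\pi w$, i.e.\ $g:=|S|^2w=\tfrac{1}{2\pi}$.

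Next I upgrade the convergence of $\phi_n^*$ to $L^q$-convergence of $g_n$ for some $q>1$: since $|\phi_n^*|^2\to|S|^2$ in $L^{p_0/2}$ and $w\in L^r$ for all $r<\infty$ (by John--Nirenberg applied to $w\in\mathrm{BMO}$), Hölder's inequality gives $g_n\to g$ in $L^q$ for every $q\in(1,p_0/2)$. In particular $g_n\to g$ in measure. For the linear piece I use $|\log w|\le w+w^{-1}$ (a calculus inequality for $w>0$) and $w,w^{-1}\in L^r$ for all finite $r$, so $\log w\in L^r$ for all $r<\infty$; by Hölder, $\int g_n\log w\,d\theta\to \tfrac{1}{2\pi}\int\log w\,d\theta$.

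The main technical point is the nonlinear piece $\int g_n\log g_n\,d\theta$. I will establish uniform integrability of $\{g_n\log g_n\}$. Using $|x\log x|\le e^{-1}$ for $x\in[0,1]$ and $x\log x\le \delta^{-1}x^{1+\delta}$ for $x\ge 1$,
\[
|g_n\log g_n|\le e^{-1}+\delta^{-1}g_n^{1+\delta}.
\]
Choosing $\delta>0$ so that $1+\delta<q$, the family $\{g_n^{1+\delta}\}$ is bounded in $L^{q/(1+\delta)}$ with exponent strictly greater than $1$, hence $\{g_n\log g_n\}$ is bounded in $L^r$ for some $r>1$ and is therefore uniformly integrable. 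Combined with convergence in measure, Vitali's theorem yields
\[
\int_{-\pi}^\pi g_n\log g_n\,d\theta\;\longrightarrow\;\int_{-\pi}^\pi g\log g\,d\theta=\log\tfrac{1}{2\pi}.
\]
Assembling the two limits,
\[
\lim_n E(n,w)=\tfrac12\log\tfrac{1}{2\pi}-\tfrac{1}{4\pi}\int_{-\pi}^\pi\log w\,d\theta=-\tfrac{1}{4\pi}\int_{-\pi}^\pi\log(2\pi w)\,d\theta,
\]
as required. The only step that is not purely formal is the uniform integrability of $g_n\log g_n$, which is precisely where the improvement $p_0>2$ (and thus $q>1$) provided by Theorem~\ref{th4} is used; without that improvement the $x\log x$ term cannot be controlled.
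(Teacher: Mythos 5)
Your proposal is correct, and it gets to the right answer by a route that differs in execution from the paper's. The paper's proof does not split off $\log w$; instead it bounds the difference of the two entropy integrals directly, via a mean-value-type inequality $|x^2\log x - y^2\log y|\le C(1+x|\log x|+y|\log y|)|x-y|$ applied pointwise to $x=|\phi_n|$, $y=|S|$, then controls the right-hand side with the generalized H\"older inequality using the same crude bound $u|\log u|\le C(\delta)(1+u^{1+\delta})$ and the $L^{p_0}$-convergence $\||\phi_n^*|-|S|\|_{p_0}\to 0$. You instead change variables to $g_n=|\phi_n^*|^2w$, separate $E(n,w)$ into a nonlinear piece $\int g_n\log g_n$ and a linear piece $\int g_n\log w$, pass to the limit in the linear piece by H\"older, and use convergence in measure plus uniform integrability (Vitali) for the nonlinear piece. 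Both arguments rest on the same two inputs --- Lemma~\ref{pa} giving $L^{p_0}$, $p_0>2$ convergence of $\phi_n^*$ to $S$, and a polynomial growth bound for $u\log u$ --- and both are valid; the paper's one-inequality estimate is more direct and avoids invoking Vitali, while your decomposition makes transparent exactly where the ``$p_0>2$'' improvement is consumed (in establishing $g_n\to g$ in some $L^q$ with $q>1$, which is what makes $\{g_n\log g_n\}$ uniformly integrable). One minor remark: the intermediate step of verifying uniform integrability and invoking Vitali could be shortcut, since you have already shown $\{g_n\log g_n\}$ bounded in $L^r$ with $r>1$ together with $g_n\to g$ in measure, but there is nothing wrong with the argument as written.
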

So far, the only classes in which the  $E(n,w)$ was known to be
bounded were the Baxter's class \cite{sim1}: $d\sigma=wd\theta, \,
w\in W(\mathbb{T}), w>0$ ($W(\mathbb{T})$ denotes the Wiener
algebra) or the class given by positive weights with a certain
modulus of continuity \cite{szego}. Our conditions are obviously
much weaker and, in a sense, sharp.
\medskip

The structure of the paper is as follows: the main results are
proved in the next section, the Appendix contains auxiliary results
from harmonic analysis.\medskip

We use the following notation: $H$ refers to the Hilbert transform,
$P_{[i,j]}$ denotes the $L^2(d\theta)$ projection to the
$(i,\ldots,j)$ Fourier modes. Given two non-negative functions
$f_{1(2)}$ we write $f_1\lesssim f_2$ is there is an absolute
constant $C$ such that
\[
f_1\le Cf_2
\]
for all values of the arguments of $f_{1(2)}$. We define $\gtrsim$
similarly and say that $f_1\sim f_2$ if $f_1\lesssim f_2$ and
$f_2\lesssim f_1$ simultaneously. Given two operators $A$ and $B$,
we write $[A,B]=AB-BA$ for their commutator. If $w$ is a function,
then in the expression like $[w,A]$, the symbol $w$ is identified
with the operator of multiplication by $w$. The
Hunt-Muckenhoupt-Wheeden characteristic of the weight $w\in A_p$
will be denoted by $[w]_{A_p}$. For the basic facts about the BMO
class, $A_p$ and their relationship, we refer the reader to, e.g.,
the classical text \cite{stein}. If $A$ is a linear operator from
$L^p(\mathbb{T})$ space to $L^p(\mathbb{T})$, then $\|A\|_{p,p}$
denotes its operator norm.\vspace{1cm}

\section{Proofs of main results }

Before proving the main result, Theorem \ref{th4}, we need some
auxiliary Lemmas. We start with the following observation which goes
back to S. Bernstein \cite{bern,szego}.
\begin{lemma}For a monic polynomial $Q$ of degree $n$, we have:
\begin{equation}\label{cut}
Q(z)=\Phi_n(z,w)\quad {\it if\,\, and\,\, only\,\, if}\quad
P_{[0,n-1]}(wQ)=0.
\end{equation}
\end{lemma}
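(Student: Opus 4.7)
The plan is to unwind the definition of $\Phi_n$: the identity \eqref{cut} is just a restatement in Fourier language of the orthogonality relations characterizing the monic orthogonal polynomial of degree $n$. The only ingredients needed are linearity, the fact that $\{\Phi_0,\ldots,\Phi_{n-1}\}$ spans the polynomials of degree at most $n-1$, and the identification of an integral against $e^{-ik\theta}$ with a Fourier coefficient.

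First I would observe that, since $\{\Phi_0,\ldots,\Phi_{n-1}\}$ is a basis for the space of polynomials of degree at most $n-1$, the orthogonality condition
\[
\int_{-\pi}^{\pi}Q(e^{i\theta})\overline{\Phi_m(e^{i\theta})}\,w(\theta)\,d\theta=0,\qquad m<n,
\]
is equivalent to
\[
\int_{-\pi}^{\pi}Q(e^{i\theta})\,e^{-ik\theta}\,w(\theta)\,d\theta=0,\qquad k=0,1,\ldots,n-1.
\]
Up to the factor $2\pi$, the left hand side is the $k$-th Fourier coefficient of $wQ$. Since $w\in\mathrm{BMO}\subset L^1$ and $Q$ is a trigonometric polynomial, $wQ\in L^1(\mathbb T)$ and these coefficients are well-defined. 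The above system of vanishing conditions is therefore precisely $P_{[0,n-1]}(wQ)=0$.

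Second, I would treat the two directions separately. The forward direction $Q=\Phi_n\Rightarrow P_{[0,n-1]}(wQ)=0$ is immediate from the display above. For the converse, suppose $Q$ is monic of degree $n$ with $P_{[0,n-1]}(wQ)=0$. Since $Q-\Phi_n$ has degree at most $n-1$ and, by linearity and the forward implication applied to $\Phi_n$, we have $P_{[0,n-1]}(w(Q-\Phi_n))=0$, the polynomial $R:=Q-\Phi_n$ is $L^2_w$-orthogonal to every polynomial of degree $\le n-1$; in particular $\int|R|^2 w\,d\theta=0$. Because $w>0$ almost everywhere (as $w^{-1}\in\mathrm{BMO}\subset L^1$, so $w^{-1}<\infty$ a.e.), this forces $R\equiv 0$, i.e.\ $Q=\Phi_n$. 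There is no substantive obstacle in the argument: the entire content is the remark that $L^2_w$-orthogonality of $Q$ to $1,z,\ldots,z^{n-1}$ translates word for word into the vanishing of the first $n$ Fourier modes of $wQ$.
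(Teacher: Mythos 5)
Your proof is correct and takes essentially the same route as the paper's one-line argument: you unwind $P_{[0,n-1]}(wQ)=0$ into the vanishing of the Fourier coefficients $\int Q e^{-ik\theta}w\,d\theta$ for $k=0,\dots,n-1$ and observe this is exactly the defining orthogonality of $\Phi_n$. The extra care you take in the converse direction (showing $R=Q-\Phi_n$ is self-orthogonal in $L^2_w$ and hence vanishes, using $w>0$ a.e.) is sound and simply makes explicit the uniqueness that the paper leaves implicit.
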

\begin{proof}It is sufficient to notice that \eqref{cut} is
equivalent to
\[
\int_{-\pi}^{\pi} Q(e^{i\theta})e^{-ij\theta}w(\theta)d\theta=0,
\quad j=0,\ldots,n-1
\]
which is the orthogonality condition.
\end{proof}

\begin{lemma}
If $f\in L^2(\mathbb{T})$ is real-valued function, $Q\in
L^\infty(\mathbb{T})$, then
\[ z^n\overline{P_{[0,n-1]}(f z^n\overline{Q})}=P_{[1,n]}(fQ)
\]
In particular, for a polynomial $P$ of degree at most $n$ with
$P(0)=1$, we have:
\[
P(z)=\Phi_n^*(z,w)\quad {\it if\,\, and\,\, only\,\, if}\quad
P_{[1,n]}(wP)=0.
\]
\end{lemma}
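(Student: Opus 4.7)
My plan is to prove the displayed identity by a direct Fourier-coefficient calculation and then use it, together with the preceding lemma, to obtain the characterization of $\Phi_n^*$.

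For the identity itself, the key observation is that, since $f$ is real on $\mathbb{T}$, $\overline{fQ}=f\overline{Q}$ pointwise on the circle, and therefore
\[
f z^n\overline{Q}=z^n\overline{fQ}\quad\text{on }\mathbb{T}.
\]
So if I expand $fQ=\sum_{k\in\mathbb{Z}}c_k z^k$, then $z^n\overline{fQ}=\sum_{k}\overline{c_k}z^{n-k}=\sum_{j}\overline{c_{n-j}}z^{j}$. Applying $P_{[0,n-1]}$ and then the map $g\mapsto z^n\overline{g}$ (which again reindexes and conjugates) returns precisely $\sum_{k=1}^{n}c_k z^k=P_{[1,n]}(fQ)$. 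This is a one-line bookkeeping of Fourier modes.

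For the second statement, given $P$ with $\deg P\le n$ and $P(0)=1$, define the monic polynomial $Q$ of degree $n$ by $Q=P^{*}$ (so that $P=Q^{*}$ and consequently $P(z)=z^{n}\overline{Q(z)}$ on $\mathbb{T}$). Then $P=\Phi_n^{*}(z,w)$ is equivalent to $Q=\Phi_n(z,w)$, which by the previous lemma is equivalent to $P_{[0,n-1]}(wQ)=0$. Now I apply the identity just proved with $f=w$ (real-valued) and with $P$ playing the role of the $L^{\infty}$ factor; using $z^{n}\overline{P}=Q$ on $\mathbb{T}$, this gives
\[
z^{n}\overline{P_{[0,n-1]}(wQ)}=P_{[1,n]}(wP),
\]
so $P_{[0,n-1]}(wQ)=0$ iff $P_{[1,n]}(wP)=0$, completing the proof.

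There is no real obstacle here; the only points requiring care are (i) using the reality of $f$ at the one step where it is genuinely needed, namely to move the $z^{n}\overline{(\cdot)}$ across the product $fQ$, and (ii) tracking the index shift $k\mapsto n-k$ carefully so that the support $\{0,\dots,n-1\}$ for the inner projection maps, after the outer $z^{n}\overline{(\cdot)}$, exactly to the band $\{1,\dots,n\}$ on the right-hand side.
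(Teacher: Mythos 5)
Your proof is correct and fills in, with explicit Fourier-coefficient bookkeeping, exactly what the paper's one-line proof (``immediate'' plus the formula $\Phi_n=z^n\overline{\Phi_n^*}$ on $\mathbb{T}$) is gesturing at; the approach is essentially the same.
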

\begin{proof}
The first statement is immediate. The second one follows from the
Lemma above and the formula $\Phi_n=z^n\overline{\Phi_n^*},$ $z\in
\mathbb{T}$.
\end{proof}

We have the following three identities for $\Phi_n^*(z,w)$; the
first one was used in \cite{dn} recently. They are immediately
implied by the Lemma above.
\begin{equation}\label{odin}
\Phi_n^*=1+P_{[1,n]}\Bigl((1-\alpha w)\Phi_n^*\Bigr), \quad
\alpha\in \mathbb{R}
\end{equation}
\begin{equation}
\Phi_n^*=1+w^{-1}[w,P_{[1,n]}]\Phi^*_n
\end{equation}
\begin{equation}
\Phi_n^*=1-[w^{-1},P_{[1,n]}](w\Phi^*_n)
\end{equation}
Denote the higher order commutators recursively:
\[
{\bf C}_0=P_{[1,n]},\quad {\bf C}_1=[w,P_{[1,n]}],\quad {\bf
C}_l=[w,{\bf C}_{l-1}], \quad l=2,3,\ldots
\]
Define the multiple commutators of $w^{-1}$ and $P_{[1,n]}$ (in that
order!) by $\widetilde{{\bf C}}_j$.

\begin{lemma}\label{l1}
The following representations hold
\begin{equation}\label{r1}
 w^j P_{[1,n]} \Phi^*_n=\sum_{l=1}^{j}
\binom{j-1}{l-1} {\bf C}_l w^{j-l} \Phi^*_n
\end{equation}
and
\begin{equation}
w^{-j} P_{[1,n]} \Phi^*_n=-\sum_{l=0}^{j}  \binom{j}{l}
\widetilde{\bf C}_{l+1} w^{-(j-l)} (w\Phi^*_n)
\end{equation}
where $j=1,2,\ldots$.
\end{lemma}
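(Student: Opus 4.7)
The plan is to prove both formulas by induction on $j$, reducing each to an operator identity that can be established from the recursive definitions of the commutators plus Pascal's rule.

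For the first identity, I would begin with the base case $j=1$: since the preceding Lemma says $P_{[1,n]}(w\Phi_n^*)=0$, unpacking ${\bf C}_1=[w,P_{[1,n]}]$ immediately yields $wP_{[1,n]}\Phi_n^*={\bf C}_1\Phi_n^*$. Multiplying both sides by $w^{j-1}$, it therefore suffices to establish the operator identity
\[
w^{j-1}{\bf C}_1 \;=\; \sum_{l=1}^{j}\binom{j-1}{l-1}{\bf C}_l\,w^{j-l},
\]
which I would prove by induction. The key algebraic input is that the definition ${\bf C}_{l+1}=[w,{\bf C}_l]$ gives the commutation rule $w\,{\bf C}_l={\bf C}_l\,w+{\bf C}_{l+1}$. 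Multiplying the inductive hypothesis on the left by $w$, applying this rule termwise, reindexing the resulting ${\bf C}_{l+1}$ sum, and collecting like terms turns adjacent pairs $\binom{j-1}{l-1}+\binom{j-1}{l-2}$ into $\binom{j}{l-1}$ via Pascal's identity, yielding the formula with $j$ replaced by $j+1$.

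For the second identity, the same strategy applies but the starting point is slightly different. From the preceding Lemma, $P_{[1,n]}(w\Phi_n^*)=0$ and $P_{[1,n]}\Phi_n^*=\Phi_n^*-1$, hence $\widetilde{\bf C}_1(w\Phi_n^*)=w^{-1}P_{[1,n]}(w\Phi_n^*)-P_{[1,n]}\Phi_n^*=-P_{[1,n]}\Phi_n^*$. Thus $P_{[1,n]}\Phi_n^*=-\widetilde{\bf C}_1(w\Phi_n^*)$, and multiplying by $w^{-j}$ reduces the claim to the operator identity
\[
w^{-j}\widetilde{\bf C}_1 \;=\; \sum_{l=0}^{j}\binom{j}{l}\widetilde{\bf C}_{l+1}\,w^{-(j-l)},
\]
proved by induction using the analogous commutation rule $w^{-1}\widetilde{\bf C}_l=\widetilde{\bf C}_l\,w^{-1}+\widetilde{\bf C}_{l+1}$ (which is just the definition $\widetilde{\bf C}_{l+1}=[w^{-1},\widetilde{\bf C}_l]$) followed by Pascal's identity on the binomial coefficients.

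There is no real obstacle here; the whole content is the elementary combinatorial bookkeeping that turns a nested commutator expansion into a weighted sum of the iterated commutators acting on appropriate powers of $w$ times $\Phi_n^*$. The only things to watch are the index ranges (ensuring the shifted sum $\sum_{l'=l+1}$ lines up correctly with the unshifted one, so that Pascal combines the interior terms and the boundary terms become $\binom{j}{0}$ and $\binom{j}{j}$) and the choice of argument on which $\widetilde{\bf C}_1$ is first applied, namely $w\Phi_n^*$ rather than $\Phi_n^*$, so that $P_{[1,n]}(w\Phi_n^*)=0$ can be invoked.
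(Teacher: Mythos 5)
Your proposal is correct and follows essentially the same route as the paper: an induction on $j$ driven by the commutation rule ${\bf C}_{l+1}=w{\bf C}_l-{\bf C}_l w$ (respectively $\widetilde{\bf C}_{l+1}=w^{-1}\widetilde{\bf C}_l-\widetilde{\bf C}_l w^{-1}$) together with Pascal's identity on the reindexed binomial coefficients. The only cosmetic difference is that you first reduce to a pure operator identity $w^{j-1}{\bf C}_1=\sum_{l=1}^{j}\binom{j-1}{l-1}{\bf C}_l w^{j-l}$ and then apply it to $\Phi_n^*$, whereas the paper runs the induction directly on the formula evaluated at $\Phi_n^*$; these are logically equivalent.
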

\begin{proof}
We will prove \eqref{r1}, the other formula can be obtained in the
similar way. The case $j=1$ of this expression is our familiar
formula $wP_{[1,n]}\Phi^*_n = [w, P_{[1,n]}] \Phi^*_n$. Now the
proof proceeds by induction. Suppose we have
$$w^{k-1}P_{[1,n]}\Phi_n^* = \sum_{l=1}^{k-1} \binom{k-2}{l-1} {\bf C}_lw^{k-1-l}\Phi^*_n$$
Multiply both sides by $w$ and  write
\[
w^{k} P_{[1,n]} \Phi^*_n= \sum_{l=1}^{k-1} \binom{k-2}{l-1} w{\bf
C}_lw^{k-1-l}\Phi^*_n=\sum_{l=1}^{k-1} \binom{k-2}{l-1} \left({\bf
C}_{l+1}w^{k-1-l}\Phi^*_n+{\bf C}_{l}w^{k-l}\Phi^*_n\right)=
\]
\[
\sum_{l=1}^{k-1} \binom{k-2}{l-1}{\bf
C}_{l}w^{k-l}\Phi^*_n+\sum_{l=2}^{k} \binom{k-2}{l-2}{\bf
C}_{l}w^{k-l}\Phi^*_n=\sum_{l=1}^{k} \binom{k-1}{l-1} {\bf
C}_{l}w^{k-l}\Phi^*_n
\]
because $$\binom{k-1}{l-1} = \binom{k-2}{l-2}+\binom{k-2}{l-1}$$.
\end{proof}
Motivated by the previous Lemma, we introduce certain operators.
Given $f\in L^p$, define $\{y_j\}$ recursively by
\[
y_0=f,\, y_j=w^j+\sum_{l=0}^{j-1} \binom{j-1}{l} {\bf C}_{l+1}
y_{j-1-l}
\]
Then, we let
\[
z_j=w^{-j}-\sum_{l=0}^{j} \binom{j}{l}{\bf \widetilde
C}_{l+1}z_{j-l-1}
\]
where $z_{-1}=y_1, z_0=y_0=f$. Notice that for fixed $j$ both $y_j$
and $z_j$ are affine linear transformations in $f$. We can write
\[
y_j=y_j'+y_j''
\]
where
\[
y_0'=f,\quad  y_0''=0
\]
and, recursively,
\[
y_j'=\sum_{l=0}^{j-1} \binom{j-1}{l} {\bf C}_{l+1} y_{j-1-l}', \quad
y_j''=w^j+\sum_{l=0}^{j-1} \binom{j-1}{l} {\bf C}_{l+1} y_{j-1-l}''
\]
Similarly, we write $z_j=z_j'+z_j''$ where
\[
z_{-1}'=y_1',\quad z_{-1}''=y_1'', \quad z_{0}'=f, \quad z_{0}''=0
\]
and
\[
z_j'=-\sum_{l=0}^{j} \binom{j}{l}{\bf \widetilde
C}_{l+1}z_{j-l-1}',\quad  z_j''=w^{-j}-\sum_{l=0}^{j}
\binom{j}{l}{\bf \widetilde C}_{l+1}z_{j-l-1}'', \quad
\]
Let us introduce linear operators: $B_jf=y'_j,\, D_jf=z'_j$. We need
an important Lemma.\bigskip
\begin{lemma}\label{l78}
\[
w^j\Phi_n^*=y_j''+B_j\Phi_n^*, \quad
w^{-j}\Phi_n^*=z_j''+D_j\Phi_n^*
\]
\end{lemma}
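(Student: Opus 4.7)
The plan is to substitute $f = \Phi_n^*$ into the recursive definitions of $y_j$ and $z_j$ and verify, by induction on $j$, that with this particular choice one obtains $y_j = w^j \Phi_n^*$ and $z_j = w^{-j} \Phi_n^*$. Once this is done, the lemma follows at once from the affine decompositions $y_j = y_j' + y_j'' = B_j \Phi_n^* + y_j''$ and $z_j = z_j' + z_j'' = D_j \Phi_n^* + z_j''$, since those decompositions are set up precisely to separate the $f$-linear part from the $f$-independent part.

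For the first identity I would argue by induction on $j \ge 0$. The base $j = 0$ is the tautology $\Phi_n^* = \Phi_n^*$. For the inductive step, I would assume $y_k = w^k \Phi_n^*$ for all $0 \le k < j$ and substitute into
\[
y_j = w^j + \sum_{l=0}^{j-1}\binom{j-1}{l}{\bf C}_{l+1} y_{j-1-l}.
\]
After the shift $m = l+1$, the sum on the right is exactly the right-hand side of \eqref{r1} in Lemma \ref{l1}, so it equals $w^j P_{[1,n]} \Phi_n^*$. Since $\Phi_n^*$ is a polynomial of degree at most $n$ with $\Phi_n^*(0) = 1$, we have $1 + P_{[1,n]} \Phi_n^* = \Phi_n^*$, which gives $y_j = w^j(1 + P_{[1,n]} \Phi_n^*) = w^j \Phi_n^*$.

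For the second identity I would proceed analogously, using the second formula of Lemma \ref{l1}. The base cases are $z_0 = f = \Phi_n^*$ and $z_{-1} = y_1 = w\Phi_n^*$, in agreement with $w^{-j} \Phi_n^*$ at $j=0$ and $j=-1$. For $j \ge 1$, inductively replacing $z_{j-l-1}$ with $w^{-(j-l-1)}\Phi_n^* = w^{-(j-l)}(w\Phi_n^*)$ in the defining recursion and invoking the second identity of Lemma \ref{l1} turns the sum into $-w^{-j} P_{[1,n]} \Phi_n^*$, so $z_j = w^{-j}(1 + P_{[1,n]}\Phi_n^*) = w^{-j}\Phi_n^*$.

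I do not anticipate any real obstacle: the recursions defining $\{y_j\}$ and $\{z_j\}$ are engineered so that Lemma \ref{l1} instantly identifies the sums with $w^{\pm j} P_{[1,n]} \Phi_n^*$. The only minor compatibility check is that the $z$-base case $z_{-1} = y_1$ is consistent with the $j=0$ instance of the recursion for $z_0$, which is just the orthogonality identity $\Phi_n^* = 1 - [w^{-1}, P_{[1,n]}](w\Phi_n^*)$ already recorded in the excerpt.
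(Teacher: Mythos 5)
Your proof is correct and matches the paper's approach: both derive the identities from $w^{\pm j}\Phi_n^*=w^{\pm j}+w^{\pm j}P_{[1,n]}\Phi_n^*$ together with Lemma~\ref{l1}, identifying $y_j$ and $z_j$ (with $f=\Phi_n^*$) inductively with $w^j\Phi_n^*$ and $w^{-j}\Phi_n^*$. You simply spell out the induction and the base-case compatibility ($z_{-1}=y_1=w\Phi_n^*$, $z_0=\Phi_n^*$) that the paper leaves implicit.
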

\begin{proof}
This follows from
\[
w^j\Phi_n^*=w^j+w^jP_{[1,n]}\Phi_n^*, \quad
w^{-j}\Phi_n^*=w^{-j}+w^{-j}P_{[1,n]}\Phi_n^*
\]
and the previous Lemma.
\end{proof}
\smallskip
The next Lemma, in particular, provides the bounds for $B_j$ and
$D_j$.
\begin{lemma} \label{someest} Assume $w\ge 0, \|w\|_{BMO}=t, \|w^{-1}\|_{BMO}=s$, $\|w\|_1=1,$ and $p\in [2,3]$.
Then,
\[
\|B_j\|_{p,p}\le (Ctj)^j, \quad \|D_j\|_{p,p}\le (1+st)(Csj)^{j}
\]
Moreover,
\[
\|y_j''\|_p\le (C\widetilde tj)^j, \quad \|z_j''\|_p\le \widetilde
s\widetilde t(C\widetilde sj)^j
\]
with
\[
\widetilde t=\max\{t,1\},\quad \widetilde s=\max\{s,1\}
\]
\end{lemma}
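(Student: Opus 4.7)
The plan is to run a straightforward induction on $j$, built on two ingredients. The first is the higher-order commutator estimate
\[
\|{\bf C}_l\|_{p,p} \le (Ct)^l, \qquad \|\widetilde{\bf C}_l\|_{p,p} \le (Cs)^l,
\]
valid uniformly in $l$ for $p\in[2,3]$. This is the Coifman--Rochberg--Weiss theorem for $l=1$ together with its iteration to multilinear commutators, precisely the ``higher order commutators'' advertised in the abstract, and will be supplied by the Appendix. The second ingredient is the John--Nirenberg moment bound applied to $w$ and $w^{-1}$: since $\|w\|_1=1$, one obtains $\|w\|_q\le 1+Cqt$ for all $q<\infty$, hence $\|w^j\|_p\le (Cj\widetilde t)^j$ for $p\in[2,3]$. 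A corresponding bound $\|w^{-j}\|_p\le \widetilde t\,(Cj\widetilde s)^j$ uses the balance lemma of the Appendix to control $\overline{w^{-1}}$ by $C\widetilde t$.

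For $B_j$, set $a_j=\|B_j\|_{p,p}$. The definition $y'_j=\sum_{l=0}^{j-1}\binom{j-1}{l}{\bf C}_{l+1}y'_{j-1-l}$ with $y'_0=f$ gives
\[
a_j \le \sum_{l=0}^{j-1}\binom{j-1}{l}(Ct)^{l+1}a_{j-1-l},\qquad a_0=1.
\]
With the ansatz $a_j\le j!\,(Ct)^j$, the inductive step reduces to
\[
\sum_{l=0}^{j-1}\binom{j-1}{l}(j-1-l)! = (j-1)!\sum_{l=0}^{j-1}\frac{1}{l!}\le e\,(j-1)!\le j!,
\]
and then $j!\le j^j$ yields $\|B_j\|_{p,p}\le (Ctj)^j$. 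The bound on $D_j$ is analogous, but its recursion feeds on the boundary value $z'_{-1}=y'_1={\bf C}_1 f$ in addition to $z'_0=f$; each chain in the expansion that descends past $z'_0$ picks up an extra factor $\|{\bf C}_1\|_{p,p}\le Ct$. Splitting the expansion of $D_jf$ into chains terminating at $z'_0$ (contributing $(Csj)^j$) and those reaching $z'_{-1}$ (contributing $st\cdot (Csj)^j$) and adding produces the prefactor $(1+st)$.

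For $y''_j$ and $z''_j$ the induction is identical in form, only the recursions now carry inhomogeneous terms $w^j$ and $w^{-j}$ respectively. Substituting the moment bounds above into the induction yields the claimed estimates; the prefactor $\widetilde s\widetilde t$ in $\|z''_j\|_p$ reflects the coupling of the $w^{-j}$-inhomogeneity with the boundary value $z''_{-1}=y''_1=w$, whose $L^p$-norm is at most $C\widetilde t$.

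The main obstacle I expect is the uniform-in-$l$ commutator estimate $\|{\bf C}_l\|_{p,p}\le (Ct)^l$: to beat the trivial recursive bound $l!\,(Ct)^l$ one must argue globally, e.g.\ by viewing the $l$-fold iterated commutator as a single operator and applying John--Nirenberg to its kernel, or by using the sharp-function formulation of BMO. Once that uniform bound is granted, the rest of the proof is bookkeeping built on the factorial identity above.
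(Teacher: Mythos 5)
Your induction strategy is the same as the paper's, but there is a real misconception at the heart of your proposal that you yourself flag as the "main obstacle." You posit the uniform bound $\|{\bf C}_l\|_{p,p}\le (Ct)^l$ and worry that one must somehow beat the "trivial" growth $l!\,(Ct)^l$. In fact the paper's Lemma~\ref{al1} gives precisely $\|{\bf C}_l\|_{p,p}\le (Clt)^l\sim l!(Ct)^l$, obtained by the Cauchy integral representation ${\bf C}_l=\partial^l F(0)$ for $F(z)=e^{zw}P_{[1,n]}e^{-zw}$ with $|z|\sim 1/t$, and the factorial is intrinsic to that method. No improvement to $(Ct)^l$ is available (nor, I suspect, true), so a proof predicated on it has a gap. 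The point you miss is that the factorial growth is harmless: with $\|{\bf C}_{l+1}\|_{p,p}\le (C(l+1)t)^{l+1}$ the recursion for $a_j=\|B_j\|_{p,p}$ becomes
\begin{equation*}
a_j \le (j-1)!\sum_{k=0}^{j-1}\frac{(Ct)^{j-k}(j-k)^{j-k}}{(j-1-k)!}\,a_k \le (Ct)^j\,j!\sum_{k=0}^{j-1}\frac{(Ct)^{-k}}{k!}\,a_k\,,
\end{equation*}
and, after dividing by $(Ct)^j j!$, one gets the superadditive recursion $\beta_j\le\sum_{l<j}\beta_l$ for $\beta_j=a_j/((Ct)^j j!)$, yielding $\beta_j\le 3^j$ and hence $a_j\le (Ctj)^j$. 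This is the paper's argument, and it closes with the available commutator bound; your computation with the stronger (unavailable) $(Ct)^l$ bound is a simpler-looking but unfounded shortcut.

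The rest of your outline is broadly in the right spirit: the John--Nirenberg moment bounds on $\|w^j\|_p$ and $\|w^{-j}\|_p$ (the latter via Lemma~\ref{balance} to pin down $\|w^{-1}\|_1$), the inhomogeneous recursions for $y_j''$ and $z_j''$, and the contribution of the boundary data $z_{-1}'=y_1'$ (respectively $z_{-1}''=y_1''=w$) producing the extra $(1+st)$ and $\widetilde s\widetilde t$ prefactors. But these are sketched loosely, and the whole thing hangs on a commutator estimate you neither prove nor correctly reduce to what the Appendix actually provides. Fix that one step, and the rest follows essentially as you describe.
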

\begin{proof}We will prove the estimates for $\|B_j\|_{p,p}$ and $\|y_j''\|_p$ only, the
bounds for $\|D_j\|_{p,p},\|z_j''\|_p$ are shown similarly. By
John-Nirenberg inequality (\cite{stein}, p.144), we get
\[
\int_{-\pi}^\pi |w-(2\pi)^{-1}|^{jp}d\theta\lesssim  j\int_0^\infty
x^{jp-1}\exp(-Cx/t)dx=j(Ct)^{jp}\Gamma(jp)\le (C_1tj)^{pj}
\]
where Stirling's formula was used for the gamma function
$\Gamma$.

Since
\[
|w|^{jp}\le (|w-(2\pi)^{-1}|+(2\pi)^{-1})^{jp}\le
C^{jp}(|w-(2\pi)^{-1}|^{jp}+1)
\]
we have
\[
\int_{-\pi}^{\pi} |w|^{jp}d\theta\le C^{jp}(1+(tj)^{jp})\le
(C_1\widetilde t j)^{jp}
\]
Lemma \ref{al1} yields
\[
\|y_j'\|_p\le \sum_{l=0}^{j-1} \frac{(j-1)!}{l!(j-1-l)!} (\widetilde
C(l+1)t)^{l+1}\|y'_{j-1-l}\|_p\le
(Ct)^{j}j!\sum_{k=0}^{j-1}\frac{(Ct)^{-k}}{k!}\|y_k'\|_p
\]
Divide both sides by $(Ct)^jj!$ and denote
$\beta_j=\displaystyle \frac{\|y'_j\|_p}{(Ct)^jj!}$. Then,
\[
\beta_j\le \sum_{l=0}^{j-1} \beta_l
\]
Since $\beta_0=\|f\|_p$, we have $\beta_j\le 3^j\|f\|_p$ by
induction and thus $\|y_j'\|_p\le (Ctj)^j\|f\|_p$. The estimates for
$\|y_j''\|_p, \|z_j'\|_p, \|z_j''\|_p$ can be obtained similarly.
\end{proof}\bigskip

\begin{lemma} \label{optim} If $\|w\|_1=1, \|w\|_{BMO}=t, \|w^{-1}\|_{BMO}=s$,
and $p\in [2,3]$, then
\[
\min_{l\in \mathbb{N}} \Bigl(\Lambda^{-l}\|B_l\|_{p,p}\Bigr)\le
\exp\left( -\frac{C\Lambda}{t}\right)
\]
and
\[
\min_{j\in \mathbb{N}} \Bigl(\epsilon^j \|D_j\|_{p,p}\Bigr)\leq
(1+st)\exp\left(-\frac{C}{\epsilon s}\right)
\]
provided that $\Lambda \gg t$ and $\epsilon \ll s^{-1}$.
\end{lemma}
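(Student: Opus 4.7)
The plan is to prove both estimates by straightforward optimization of the bounds from Lemma \ref{someest}. For the $B_l$ estimate, I would start from the bound $\|B_l\|_{p,p}\le (Ctl)^l$ already established and write
\[
\Lambda^{-l}\|B_l\|_{p,p}\le \left(\frac{Ctl}{\Lambda}\right)^{l}.
\]
Taking the logarithm, this quantity equals $\exp(l\log(Ctl/\Lambda))$, and minimizing $l\log(Ctl/\Lambda)$ in the continuous variable $l$ gives a critical point at $l^*=\Lambda/(Cte)$, where the quantity in parentheses equals $e^{-1}$ and the exponent equals $-l^*$.

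Since the hypothesis $\Lambda\gg t$ ensures $l^*\gg 1$, I can round to the nearest integer $l=\lfloor l^*\rfloor$, which satisfies $Ctl/\Lambda\le e^{-1}$ and $l\ge l^*/2$; substituting gives
\[
\min_{l\in\mathbb{N}}\Lambda^{-l}\|B_l\|_{p,p}\le e^{-l}\le \exp\!\left(-\frac{\Lambda}{2Cte}\right),
\]
which is the desired bound after relabelling the constant.

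The second estimate proceeds identically. From Lemma \ref{someest} I have $\|D_j\|_{p,p}\le (1+st)(Csj)^j$, so
\[
\epsilon^j\|D_j\|_{p,p}\le (1+st)(Cs\epsilon j)^{j},
\]
and the same calculus optimization picks $j^*=1/(Cs\epsilon e)$, which is $\gg 1$ precisely under the assumption $\epsilon\ll s^{-1}$. At this value $Cs\epsilon j = e^{-1}$, yielding $(1/e)^{j^*}=\exp(-1/(Cs\epsilon e))$. Rounding to the nearest integer and absorbing the factor $1/(Ce)$ into the constant $C$ produces the stated bound $(1+st)\exp(-C/(\epsilon s))$.

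There is no serious obstacle here: the lemma is an elementary optimization once the factorial/Stirling bounds of Lemma \ref{someest} are in hand. The only point requiring minor care is verifying that the continuous optimum $l^*$ (respectively $j^*$) is large enough that the integer rounding costs only a factor of $2$ in the exponent, which is exactly what the assumptions $\Lambda\gg t$ and $\epsilon\ll s^{-1}$ guarantee.
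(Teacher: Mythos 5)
Your proof is correct and follows exactly the paper's approach: both start from the bound $\|B_l\|_{p,p}\le (Ctl)^l$ of Lemma~\ref{someest}, optimize $(Ctl/\Lambda)^l$ over continuous $l$ to find $l^*\sim\Lambda/(Cte)$, and then round to an integer, with the hypothesis $\Lambda\gg t$ (respectively $\epsilon\ll s^{-1}$) guaranteeing $l^*\gg 1$ so the rounding only costs a constant factor in the exponent. Your write-up is simply a more detailed version of the paper's one-line argument.
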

\begin{proof}
By the previous Lemma, we have
\[
\Bigl(\Lambda^{-l}\|B_l\|_{p,p}\Bigr)\le\left(\frac{Ctl}{\Lambda}\right)^l
\]
Optimizing in $l$ we get $l^*\sim C\Lambda/(te)$ and it gives the
first estimate. The proof for the second one is identical.
\end{proof}
Now we are ready to prove the main results of the paper.
\begin{proof}{\it (Theorem \ref{th4}).}
Notice first that \eqref{nik} follows from the Nikolskii inequality
\[
\|Q\|_\infty<Cn^{1/{p_0}}\|Q\|_{p_0}, \quad \deg Q=n,\quad p_0\ge 2
\]
as long as the $L^{p_0}$ norms are estimated.
\bigskip

By scaling invariance, we can assume that $\|w\|_1=1$. We consider
two cases separately: $st\gg 1$ and $st\ll 1$. The proofs will be
different.\bigskip

{\bf 1. The case $st\gg 1$.}\smallskip

 Let
$p=2+\delta$ with $\delta<1$. Take two $n$-independent parameters
$\epsilon$ and $\Lambda$ such that $\epsilon s\ll 1$ and $\Lambda
t^{-1}\gg 1$. Consider the following sets \mbox{$\Omega_1=\{x:
w\le\epsilon\},$} $\Omega_2=\{x: \epsilon<w<\Lambda\}, \Omega_3=\{x:
w\ge\Lambda\}$. Notice that
\[
\epsilon s\ll 1, t\Lambda^{-1}\ll 1 \implies (\epsilon
s)(t\Lambda^{-1})\ll 1 \implies \epsilon\Lambda^{-1}\ll (st)^{-1}\ll
1\implies  \epsilon\ll \Lambda
\]
From \eqref{odin}, we have
\[
\Phi_n^*=1+P_{[1,n]}(1-w/\Lambda)\Phi^*_n
\]
The idea of our proof is to rewrite this identity in the form
\[
\Phi_n^*=f_n+\cal{O}(n)\Phi^*_n
\]
where $\|f_n\|_p<C(s,t)$ and $\cal{O}(n)$ is a contraction in $L^p$
for the suitable choice of $p$. To this end, we consider operators
\begin{eqnarray*}
\cal{O}_1(n)f=\epsilon^jP_{[1,n]}(1-w/\Lambda)\chi_{\Omega_1}\left(\frac{w}{\epsilon}\right)^j D_jf\\
\cal{O}_2(n)f=P_{[1,n]}(1-w/\Lambda)\chi_{\Omega_2}f\\
\cal{O}_3(n)f=\Lambda^{-l}P_{[1,n]}(1-w/\Lambda)(\Lambda/w)^l\chi_{\Omega_3}B_{l}f
\end{eqnarray*}
where $j$ and $l$ will be fixed later, they will be $n$-independent.
Let us estimate the $(L^p, L^p)$ norms of these operators.  Since
$\|P_{[1,n]}\|_{p,p}\le 1+C\delta$ (see Lemma \ref{hilb}),  we choose
$j$ and $l$ as in Lemma \ref{optim} to ensure
\[
\|\cal{O}_1(n)\|_{p,p}\le st\exp\left(-\frac{\widehat C}{\epsilon s}
\right)
\]
\[
\|\cal{O}_2(n)\|_{p,p}\le (1+C\delta)(1-\epsilon\Lambda^{-1})
\]
\[
\|\cal{O}_3(n)\|_{p,p}\le \exp\left(-\frac{\widehat C\Lambda}{t}
\right)
\]
Lemma \ref{l78} now yields
\[
\Phi_n^*=1+f_1(n)+f_3(n)+(\cal{O}_1(n)+\cal{O}_2(n)+\cal{O}_3(n))\Phi_n^*
\]
where
\[
f_1(n)=\epsilon^jP_{[1,n]}(1-w/\Lambda)\chi_{\Omega_1}\left(\frac{w}{\epsilon}\right)^j
z_j'', \quad
f_3(n)=\Lambda^{-l}P_{[1,n]}(1-w/\Lambda)(\Lambda/w)^l\chi_{\Omega_3}y_l''
\]
\[
{\rm Let }\,\,\quad f(n)= 1+f_1(n)+f_3(n)
\]
Then Lemma \ref{someest} provides the bound
\begin{equation}\label{ravn}
\|f(n)\|_p\le C(s,t)
\end{equation}
uniform in $n$. Denote
$\cal{O}(n)=\cal{O}_1(n)+\cal{O}_2(n)+\cal{O}_3(n)$ and select
parameters $\epsilon,\Lambda,\delta$ such that
$\|\cal{O}(n)\|_{p,p}<1-C\delta$. To do so, we first let
$\delta=c\epsilon\Lambda^{-1}$ with small positive absolute constant
$c$. Then, we consider
\[
st\exp\left(-\frac{\widehat C}{\epsilon s}
\right)+\exp\left(-\frac{\widehat C\Lambda}{t}
\right)=\frac{c_1\epsilon }{\Lambda}
\]
with $c_1$ again being a small constant. Now, solving equations
\[
st\exp(-\widehat C/(\epsilon s))=\exp(-\widehat C\Lambda/t), \quad
c_1\epsilon/\Lambda=2\exp(-\widehat C\Lambda/t)
\]
we get the statement of the Theorem. Indeed, we have two equations:
\[
\epsilon=\frac{\widehat Ct}{s(\widehat C\Lambda+t\log(s t))}
\]
and
\[
\frac{\Lambda}{t}=\frac{1}{\widehat
C}\left(C+\log(s\Lambda)+\log\Bigl(\frac{\Lambda}{t}+\frac{\log(st)}{\widehat
C }\Bigr)\right)
\]
Denote
\[
u=\widehat C\Lambda/t
\]
and then
\[
u=C+\log(st)+2\log u+\log\Bigl(1+\frac{\log (st)}{u}\Bigr)
\]
To find the required root, we restrict the range of $u$ to
$c_1\log(st) < u < c_2\log(st)$ for $c_1 \ll 1$, $c_2 \gg 1$.
Rewrite the equation above as
$$u-2\log u - \log\left(1+ \frac{\log(st)}{u}\right) = \log(st)+C$$
 Differentiating the left hand side in $u$, we see that ${\rm
 l.h.s.}'\sim 1$ within the given range. Therefore, there is exactly one solution $u$ and $u\sim \log(st)$.
 Then, since
$$\log\left(1+\frac{\log(st)}{u}\right)$$ is $O(1)$, we get
\[
u=\log(st)+2\log u+O(1)=\log(st)+2\log\log(st)+O(1)
\]
by iteration. Thus, \[ \frac{\epsilon}{\Lambda}=Ce^{-u}\sim
\frac{1}{st\log^2(st)}
\]
and $\delta\sim \frac{1}{st\log^2(st)}$. Now that we proved that
$\|\cal{O}(n)\|_{p,p}\le 1-C\delta<1$, we can rewrite
\[
\Phi_n^*=f(n)+\sum_{j=1}^\infty \cal{O}^j(n)f(n)
\]
and the series converges geometrically in $L^p$ with tail being
uniformly small in $n$ due to \eqref{ravn}.

To show that $\Phi_n^*$ converges in $L^p$ as $n\to\infty$, it is
sufficient to prove that  $\cal{O}^j(n)f(n)$ converges for each $j$.
This, however, is immediate. Indeed,
\[
P_{[1,n]}f\to P_{[1,\infty]}f,\quad {\rm as} \quad n\to\infty
\] in $L^q$ for all $f\in L^q, 1<q<\infty$. Since $w,w^{-1}\in
{\rm BMO}\subset \cap_{p\ge 1} L^p$ (\cite{stein}, this again
follows from John-Nirenberg estimate), we see that multiplication by
$w^{\pm j}$ maps $L^{p_1}$ to $L^{p_2}$ continuously by H\"older's
inequality provided that $p_2<p_1$ and $j\in \mathbb{Z}$. Therefore,
if $\mu_j\in L^\infty, j=1,\ldots,k$, then
\begin{equation}\label{sss}
\mu_1w^{\pm j_1}P_{[1,n]}\mu_2w^{\pm j_2}\ldots \mu_{k-1}w^{\pm
j_{k-1}}P_{[1,n]}\mu_kw^{\pm j_k}
\end{equation}
has the limit in each $L^p, p<\infty$ when $n\to\infty$. Notice that
each $f(n)$ and $\cal{O}^j(n)f(n)$ can be written as a linear
combination of expressions of type \eqref{sss} ($\{\mu_j\}$ taken as
the characteristic functions). Now that $\delta$ is chosen, we
define $p_0$ in the statement of the Theorem as $p_0=2+\delta$.
\bigskip

{2. \bf The case $st\ll 1$.}\smallskip

The proof in this case is much easier. Let us start with two
identities
\[
\Phi_n^*=1+w^{-1}[w,P_{[1,n]}]\Phi_n^*, \quad
\Phi_n^*=1+[P_{[1,n]},w^{-1}]w\Phi_n^*
\]
which can be recast as
\[
w\Phi_n^*=w+[w,P_{[1,n]}]\Phi_n^*, \quad
\Phi_n^*=1+[P_{[1,n]},w^{-1}]w\Phi_n^*
\]
Substitution of the first formula into the second one gives
\[
\Phi_n^*=1+[P_{[1,n]},w^{-1}]w+G_n\Phi_n^*
\]
where
\[
G_n=[P_{[1,n]},w^{-1}][w,P_{[1,n]}]
\]
We have
\[
\|1+[P_{[1,n]},w^{-1}]w\|_{p}\le C(s,t,p)
\]
and
\[
\|G_n\|_{p,p}\lesssim stp^4
\]
by Lemma \ref{ce}. Taking $p<p_0\sim (st)^{-1/4}$ we have that $G_n$
is a contraction. Now, the convergence of all terms in the geometric
series can be proved as before.
\end{proof}
Let us give a sketch of how the arguments can be modified to prove
Theorem \ref{th1}.
\begin{proof}{\it (Theorem \ref{th1}).}   Consider the case $w\ge 1$
first.\smallskip

{\bf 1. The case $t\gg 1$.}\smallskip

 The proof is identical
except that we can chose $\epsilon=1/2$ so that
$\Omega_1=\emptyset$. We get an equation for $\Lambda$
\[
\frac{ C}{\Lambda}=\exp\left(-\frac{\widehat C\Lambda}{t}\right),
\quad \Lambda=\widehat C^{-1}t(\log\Lambda-\log C)
\]
Denote $\widehat C\Lambda/t=u$, then
\[
u=\log t+\log u+O(1), \, u=\log t+\log\log t+O(1)
\]
and $\delta\sim (t\log t)^{-1}$.\bigskip

{\bf 2. The case $t\ll 1$.}\smallskip

We have
\[
\Phi_n^*=1+L_n\Phi_n^*, \quad L_nf=w^{-1}[w,P_{[1,n]}]f
\]
and Lemma \ref{ce} yields
\[
\|L_n\|_{p,p}\lesssim p^2t<0.5
\]
for $p<p_0=O(t^{-1/2})$.\bigskip

The case $w\le 1$ can be handled similarly. When $s$ is large, we
take $\Lambda=1$ in the proof of the previous Theorem and get an
equation for $\epsilon$:
\[
{C\epsilon}=s\exp\left(  -\frac{\widehat C}{\epsilon s} \right)
\]
Its solution for large $s$ gives the required asymptotics for
$\epsilon$ and, correspondingly, for $\delta$ and $p_0$. If $s$ is
small,  it is enough to consider the equation
\[
\Phi_n^*=1-[w^{-1},P_{[1,n]}]w\Phi_n^*
\]
where the operator $[w^{-1},P_{[1,n]}]w$ is contraction in $L^{p_0}$
for the specified $p_0$.
\end{proof}

Now we are ready to prove Corollary \ref{entr}.

\begin{proof}{\it (of Corollary \ref{entr}).}
The following inequality follows from the Mean Value Formula
\[
|x^2\log x-y^2\log y|\le C(1+x|\log x|+y|\log y|)|x-y|, \quad x,y\ge
0
\]
Since $w\in \cap_{p<\infty} L^p$, the Theorem \ref{th4} yields
\[
\int_{-\pi}^\pi
||\phi_n|^2\log|\phi_n|-|S|^2\log|S||wd\theta\lesssim
\int_{-\pi}^\pi(1+|\phi_n\log\phi_n|+|S\log
S|)||\phi_n^*|-|S||wd\theta\to 0, \, n\to\infty
\]
by applying  the trivial bound: $u|\log u|\le
C(\delta)(1+u^{1+\delta}), \delta>0$ and the generalized H\"older's
inequality to $|\phi_n|^{1+\delta} ({\rm or\,} |S|^{1+\delta}),
||\phi_n^*|-|S||$, and $w$. To conclude the proof, it is sufficient
to notice that
\[
\int_{-\pi}^{\pi} |S|^2\log |S|wd\theta=
-\frac{1}{4\pi}\int_{-\pi}^\pi \log(2\pi w)d\theta
\]
because $|S|^{-2}=2\pi w$.
\end{proof}
\vspace{1cm}

\section{Appendix}
In this Appendix, we collect some auxiliary results used in the main
text.

\begin{lemma}\label{hilb}
For every $p\in [2,\infty)$,
\begin{equation}
\|P_{[1,n]}\|_{p,p}\le 1+C(p-2)\,.
\end{equation}
\end{lemma}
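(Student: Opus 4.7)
The proof plan rests on two facts: (i) $P_{[1,n]}$ is the orthogonal projection of $L^2(\mathbb{T})$ onto $\mathrm{span}\{e^{ij\theta}\}_{j=1}^n$, so $\|P_{[1,n]}\|_{2,2}=1$ exactly; and (ii) $P_{[1,n]}$ has uniformly (in $n$) bounded $L^p$ norm for every $1<p<\infty$. Granting these two facts, the estimate $1+C(p-2)$ is an immediate Riesz–Thorin consequence.

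For (ii), the cleanest route is to write
\[
P_{[1,n]} \;=\; P_{[1,\infty)} \;-\; M_{e^{i(n+1)\theta}}\, P_{[0,\infty)}\, M_{e^{-i(n+1)\theta}},
\]
where $M_g$ is multiplication by $g$. Multiplication by a unimodular function is an $L^p$ isometry, and both $P_{[0,\infty)}$ and $P_{[1,\infty)}$ are (up to a rank–one correction) the Riesz projection, which is bounded on $L^p$ for $1<p<\infty$ with constant $C_p=1/\sin(\pi/p)$ by the classical Hilbert transform bound. Hence $\|P_{[1,n]}\|_{p,p}\le 2C_p$ uniformly in $n$. In particular, fixing any $q\in(2,\infty)$, say $q=4$, we get $\|P_{[1,n]}\|_{q,q}\le M$ for an absolute constant $M$.

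The next step is Riesz–Thorin interpolation between the endpoints $p=2$ (norm $1$) and $p=q$ (norm $\le M$). For $p\in[2,q]$ with $\frac{1}{p}=\frac{1-\theta}{2}+\frac{\theta}{q}$, one has $\theta=\frac{q(p-2)}{p(q-2)}\le \frac{p-2}{q-2}$, and therefore
\[
\|P_{[1,n]}\|_{p,p} \;\le\; 1^{1-\theta}\, M^{\theta} \;=\; \exp\!\bigl(\theta\log M\bigr) \;\le\; 1+C_1(p-2),
\]
where $C_1$ is an absolute constant (using that $e^x\le 1+(e^{x_0}/x_0)x$ on $[0,x_0]$ with $x_0=\log M$). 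For $p\ge q$ one simply uses the bound $\|P_{[1,n]}\|_{p,p}\le 2/\sin(\pi/p)\lesssim p$, which is comfortably dominated by $1+C_2(p-2)$ for $p\ge q$. Taking $C=\max(C_1,C_2)$ gives the stated inequality for all $p\in[2,\infty)$.

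There is no real obstacle here — everything reduces to boundedness of the Riesz projection plus log–convexity of operator norms. The only slight subtlety worth being careful about is the elementary pointwise inequality $M^{\theta}\le 1+C\theta$ valid on a bounded range of $\theta$, and matching the linear growth of the Riesz projection norm at large $p$ against the right-hand side $1+C(p-2)$; both are routine.
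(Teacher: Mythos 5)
Your proposal is correct and follows essentially the same route as the paper: both establish uniform boundedness of $P_{[1,n]}$ on $L^p$ by reducing to a classical Riesz-projection/Hilbert-transform bound (the paper decomposes via $\mathcal{P}^+=\tfrac12(1+iH)+P_0$ with Pichorides' constant, you use the analytic projection directly---an alternative the paper's own remark notes), and both then interpolate with the trivial $L^2$ bound via Riesz--Thorin. One harmless algebraic slip: for $p\in[2,q)$ the factor $q/p\ge 1$ gives $\theta=\frac{q(p-2)}{p(q-2)}\ge\frac{p-2}{q-2}$, not $\le$ as written, but since $\theta\le\frac{q}{2}\cdot\frac{p-2}{q-2}$ one still has $\theta\lesssim p-2$, so the conclusion is unaffected.
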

\begin{proof}If $\cal{P}^+$ is the projection of $L^2(\mathbb{T})$ onto $H^2(\mathbb{T})$ (analytic Hardy space), then
\[
\cal{P}^+=0.5(1+iH)+P_{0}\,,
\]
where $H$ is the Hilbert transform on the circle and $P_{0}$ denotes
the Fourier projection to the constants, i.e.,
\begin{equation}
P_{0}f=(2\pi)^{-1}\int_{\mathbb{T}} f(x)dx
\end{equation}
We therefore have a representation
\begin{equation}\label{proj}
P_{[1,n]}=z\cal{P}^+z^{-1}-z^{n+1} \cal{P}^+
z^{-(n+1)}=0.5i(zHz^{-1}-z^{n+1} H z^{-(n+1)})+zP_{0}z^{-1}-z^{n+1}P_0z^{-(n+1)}\,.
\end{equation}
 Since
$\|H\|_{p,p}=\cot(\pi/(2p))$ \cite{pich}, we have
\begin{equation}
\|P_{[1,n]}\|_{p,p}\le \cot\left(\frac{\pi}{2p}\right)+2
\end{equation}
by triangle inequality. On the other hand, $\|P_{[1,n]}\|_{2,2}=1$ so by Riesz-Thorin theorem, we can interpolate between $p=2$ and, e.g., $p=3$ to get
\[
\|P_{[1,n]}\|_{p,p}\le 1+C(p-2), \quad p\in [2,3]\,.
\]
Noticing that $\cot(\pi/(2p))\sim p, \, p>3$, we get the statement of the Lemma.
\end{proof}
{\bf Remark.} In the proof above, we could have used the expression for the norm $\|\cal{P}^+\|_{p,p}$ obtained in \cite{iv}. 
\bigskip

 The proof of the following Lemma uses some standard results of Harmonic Analysis.\smallskip
\begin{lemma}\label{al1} If $\|w\|_{BMO}=t$ and $p\in [2,3]$, then we have
\[
\|{\bf C}_j\|_{p,p}\le   (Cjt)^j
\]
\end{lemma}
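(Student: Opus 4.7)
The plan is to exploit the explicit representation of $P_{[1,n]}$ derived inside the proof of Lemma \ref{hilb},
\[
P_{[1,n]} = \tfrac{i}{2}\bigl(zHz^{-1} - z^{n+1}Hz^{-(n+1)}\bigr) + zP_0z^{-1} - z^{n+1}P_0z^{-(n+1)},
\]
and reduce the iterated commutator bound to two separate estimates: one for the commutators of $w$ with the Hilbert transform $H$, and one for the commutators with the rank-one projection $P_0$. Since multiplication by any unimodular function commutes with multiplication by $w$, one checks by induction that $\mathrm{ad}_w^j(z^k T z^{-k}) = z^k (\mathrm{ad}_w^j T) z^{-k}$, which has the same $L^p\to L^p$ norm as $\mathrm{ad}_w^j T$. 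Hence by the triangle inequality it suffices to prove $\|\mathrm{ad}_w^j H\|_{p,p} \le (Cjt)^j$ and $\|\mathrm{ad}_w^j P_0\|_{p,p} \le (Cjt)^j$, uniformly in $n$.

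For the Hilbert transform I intend to use the classical analytic-perturbation trick of Coifman--Rochberg--Weiss. Consider the operator-valued analytic function $H_z = e^{zw} H e^{-zw}$. Differentiating $j$ times and evaluating at $z=0$ gives $\mathrm{ad}_w^j H$, so Cauchy's integral formula produces
\[
\mathrm{ad}_w^j H = \frac{j!}{2\pi i}\oint_{|z|=r}\frac{H_z}{z^{j+1}}\,dz, \qquad \|\mathrm{ad}_w^j H\|_{p,p} \le \frac{j!}{r^j}\sup_{|z|=r}\|H_z\|_{p,p}.
\]
A change of variables identifies $\|H_z\|_{p,p}$ with the weighted norm $\|H\|_{L^p(W)\to L^p(W)}$ for the weight $W = e^{p\Re(z)w}$. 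The John--Nirenberg theorem implies that for $|s| \le c/t$ one has $e^{sw}\in A_p$ with $[e^{sw}]_{A_p}\le C$; combined with the standard weighted bound $\|H\|_{L^p(W)\to L^p(W)} \le C[W]_{A_p}^{\max(1,1/(p-1))}$, which for $p\in[2,3]$ is just $O([W]_{A_p})$, this gives $\sup_{|z|\le c/(pt)}\|H_z\|_{p,p}\le C$. Choosing $r\sim 1/t$ and invoking Stirling, we conclude $\|\mathrm{ad}_w^j H\|_{p,p}\le C\,j!\,(Ct)^j \le (C_1 jt)^j$.

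For $P_0$, a direct induction on $j$ using Pascal's identity produces the closed form
\[
\mathrm{ad}_w^j P_0 = \sum_{k=0}^j (-1)^k\binom{j}{k} w^{j-k} P_0 w^k.
\]
Since constants commute with $P_0$, replacing $w$ by its mean-zero part $\tilde w = w-P_0 w$ (with $\|\tilde w\|_{BMO}=t$) does not change $\mathrm{ad}_w^j P_0$ for $j\ge 1$. Estimating each term by H\"older,
$\|\tilde w^{j-k} P_0 \tilde w^k\|_{p,p} \le \|\tilde w^{j-k}\|_p \|\tilde w^k\|_{p'}$, and applying the John--Nirenberg consequence $\|\tilde w\|_{L^q}\lesssim qt$ (so $\|\tilde w^m\|_p \lesssim (Cmpt)^m$), together with the elementary inequality $(j-k)^{j-k}k^k \le j^j$ and $\sum_k\binom{j}{k}=2^j$, yields the desired $(Cjt)^j$ bound.

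The main obstacle is keeping the dependence on $j$ of the right order (essentially linear inside the $j$-th power, rather than quadratic) in the Hilbert-transform estimate. This hinges on the fact that the radius of analyticity $r$ in the Cauchy integral scales as $1/t$ uniformly in $j$, which in turn rests on the exponential stability of the $A_p$ class under $w\mapsto e^{sw}$ for $|s|\lesssim 1/\|w\|_{BMO}$. Once that is secured, everything else is a clean combination of John--Nirenberg and weighted-$L^p$ theory for the Hilbert transform, and the $n$-independence is automatic because rotations $z\mapsto z^k$ are isometries on $L^p(\mathbb{T})$.
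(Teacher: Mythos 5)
Your argument coincides in its essential structure with the paper's: both rely on the Coifman--Rochberg--Weiss analytic family $e^{zw}(\cdot)e^{-zw}$, the John--Nirenberg fact that $e^{sw}\in A_2\subset A_p$ with uniformly bounded characteristic for $|s|\lesssim 1/t$, the Hunt--Muckenhoupt--Wheeden weighted bound for $H$, and Cauchy's integral formula at radius $\sim 1/t$ followed by Stirling. The only (inessential) difference is ordering: you decompose $P_{[1,n]}$ into its $H$- and $P_0$-pieces \emph{before} running the analytic perturbation and dispatch $P_0$ by direct binomial expansion with H\"older and John--Nirenberg, whereas the paper conjugates all of $P_{[1,n]}$ at once and estimates the weighted norms of both $H$ and $P_0$ inside the contour integral; both routes are correct and equally elementary, though the paper is a bit more careful to justify the weak analyticity of $z\mapsto e^{zw}P_{[1,n]}e^{-zw}$ before invoking Cauchy's formula.
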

\begin{proof}

 Consider the following
operator-valued function
\[
F(z)=e^{zw}P_{[1,n]}e^{-zw}
\]
If we can prove that $F(z)$ is weakly analytic around the origin
(i.e., analyticity of the scalar function $\langle
F(z)f_1,f_2\rangle$ with fixed $f_{1(2)}\in C^\infty$), then
\[
F(z)=\frac{1}{2\pi i}\int_{|\xi|=\epsilon} \frac{F(\xi)}{\xi-z}d\xi,
\quad z\in B_\epsilon(0)
\]
understood in a weak sense. By induction, one can then easily show
the well-known formula
\[
{\bf C}_j=\partial^{j}F(0)=\frac{j!}{2\pi i}\int_{|\xi|=\epsilon}
\frac{F(\xi)}{\xi^{j+1}}d\xi
\]
which explains that we can control $\|{\bf C}_j\|_{p,p}$ by the size
of $\|F(\xi)\|_{p,p}$ on the circle of radius $\epsilon$. Indeed,
\[
\|{\bf C}_j\|_{p,p}=\sup_{f_{1(2)}\in C^\infty, \|f_1\|_p\le1,
\|f_2\|_{p'}\le1} |\langle {\bf C}_j f_1,f_2\rangle|\leq
\]
\[
\frac{j!}{2\pi}\sup_{f_{1(2)}\in C^\infty, \|f_1\|_p\le1,
\|f_2\|_{p'}\le1}\left|\int_{|\xi|=\epsilon} \frac{\langle
F(\xi)f_1,f_2\rangle}{\xi^{j+1}}d\xi \right|\le
\frac{j!}{\epsilon^j}\max_{|\xi|=\epsilon}\|F(\xi)\|_{p,p}
\]
The weak analyticity of $F(z)$  around the origin follows
 immediately from, e.g., the John-Nirenberg estimate (\cite{stein}, p.144). To bound $\|F\|_{p,p}$, we use the following well-known result (which
 is again an immediate corollary
from John-Nirenberg inequality, see, e.g., \cite{stein}, p.218).

{\it There is $\epsilon_0$ such that
\[
\|\widetilde w\|_{BMO}<\epsilon_0 \implies
[e^{\widetilde{w}}]_{A_p}\le [e^{\widetilde{w}}]_{A_2}<C,\quad  p>2
\]
}

The Hunt-Muckenhoupt-Wheeden Theorem (\cite{stein}, p.205), asserts
that
\begin{equation}\label{oh}
\sup_{[\widehat w]_{A_p}\le C} \|H\|_{(L^p_{\widehat
w}(\mathbb{T}),L^p_{\widehat w}(\mathbb{T}))}=
 \sup_{[\widehat w]_{A_p}\le C} \|\widehat w^{1/p}H\widehat
 w^{-1/p}\|_{p,p}=C(p)<\infty, \quad p\in [2,\infty)\,.
\end{equation}
We also have 
\[
\sup_{[\widehat w]_{A_p}\le C} \|P_{0}\|_{(L^p_{\widehat
w}(\mathbb{T}),L^p_{\widehat w}(\mathbb{T}))}=
 \sup_{[\widehat w]_{A_p}\le C, \|f\|_p\leq 1} \|\widehat w^{1/p}P_{0}\Bigl(\widehat
 w^{-1/p}f\Bigr)\|_{p}\leq (2\pi)^{-1} \sup_{[\widehat w]_{A_p}\le C, \|f\|_p\leq 1}\Bigl( \|f\|_p \|\widehat w\|_1^{1/p} \|\widehat w^{-1/p}\|_{p'}\Bigr)
\]
by H\"older's inequality. The last expression is bounded by a constant since
\[
[\widehat w]_{A_p}=\sup_Q \left(  \frac{1}{|Q|}\int_Q \widehat w dx \cdot \left(\frac{1}{|Q|} \int_Q \widehat w^{-p'/p}dx\right)^{p/p'}\right)\leq C\,,
\]
where $Q$ is any subarc of $\mathbb{T}$. Finally, taking  $\epsilon\ll t^{-1}$, we get the statement.
\end{proof}\bigskip

The following Lemma provides an estimate which is not optimal but it
is good enough for our purposes.\smallskip
\begin{lemma}\label{balance}
Suppose $w\ge 0,\|w\|_{BMO}=t, \|w^{-1}\|_{BMO}=s$, and $\|w\|_1=1$.
Then,
\[
(2\pi)^2\le \|w^{-1}\|_1\lesssim 1+(1+t)s
\]
\end{lemma}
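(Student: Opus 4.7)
The plan is to handle the two bounds separately, with the lower one being immediate from Cauchy--Schwarz and the upper one coming from exploiting the identity $w \cdot w^{-1}=1$ together with John--Nirenberg.

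For the lower bound, since $\|w\|_1=1$, Cauchy--Schwarz applied to the factorization $1=w^{1/2}\cdot w^{-1/2}$ gives
\[
2\pi=\int_{-\pi}^{\pi} w^{1/2}\,w^{-1/2}\,d\theta \le \|w\|_1^{1/2}\|w^{-1}\|_1^{1/2}=\|w^{-1}\|_1^{1/2},
\]
which yields $(2\pi)^2\le \|w^{-1}\|_1$.

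For the upper bound, set $M=\frac{1}{2\pi}\int_{-\pi}^{\pi} w^{-1}\,d\theta$ so that $\|w^{-1}\|_1=2\pi M$, and note that the mean of $w$ is $\frac{1}{2\pi}$. Using
\[
w\cdot w^{-1}=\tfrac{1}{2\pi}\cdot M+\tfrac{1}{2\pi}(w^{-1}-M)+M\bigl(w-\tfrac{1}{2\pi}\bigr)+\bigl(w-\tfrac{1}{2\pi}\bigr)(w^{-1}-M),
\]
and integrating over $[-\pi,\pi]$, the two middle terms vanish (they have mean zero), while the left-hand side integrates to $2\pi$. Hence
\[
M=2\pi-\int_{-\pi}^{\pi}\bigl(w-\tfrac{1}{2\pi}\bigr)(w^{-1}-M)\,d\theta.
\]
The remaining step is to control the last integral by $ts$ up to an absolute constant. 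By Cauchy--Schwarz its modulus is at most $\|w-\tfrac{1}{2\pi}\|_2\cdot \|w^{-1}-M\|_2$, and here the key input is the standard consequence of John--Nirenberg on $\mathbb{T}$ (\cite{stein}, p.~144): any $f\in\mathrm{BMO}(\mathbb{T})$ with mean $\bar f$ satisfies $\|f-\bar f\|_{L^2(\mathbb{T})}\lesssim \|f\|_{\mathrm{BMO}}$ with an absolute constant (since $|\mathbb{T}|$ is absolute). Applied to $w$ and $w^{-1}$ this yields $\|w-\tfrac{1}{2\pi}\|_2\lesssim t$ and $\|w^{-1}-M\|_2\lesssim s$, so $|M-2\pi|\lesssim ts$.

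Combining, $\|w^{-1}\|_1=2\pi M\le (2\pi)^2+C\,ts$, and since $(2\pi)^2$ is an absolute constant we absorb it into the leading $1$ to obtain $\|w^{-1}\|_1\lesssim 1+ts\le 1+(1+t)s$. There is no serious obstacle: the only points worth being careful about are that the cross terms in the expansion of $w\cdot w^{-1}$ vanish exactly because we centered both factors against their own means, and that the John--Nirenberg constant on the torus is absolute (independent of $t$, $s$, or $n$).
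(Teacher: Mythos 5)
Your proof is correct and takes a genuinely different---and arguably cleaner---route than the paper's. The lower bound via Cauchy--Schwarz is the same in both. For the upper bound, the paper works with the level sets of $w^{-1}$: it applies John--Nirenberg to show that outside a small set $\Omega^c$ (of measure $\lesssim (s/M)^2$ where $M=\|w^{-1}\|_1$) one has $w\sim 4\pi/M$, splits $\int w=1$ into the part where $w\le 4\pi/M$ and the rest, and then estimates the tail piece by $\|w\|_2\,|\Omega^c|^{1/2}\lesssim (1+t)s/M$ to close a self-improving inequality for $M$. Your argument instead centers both $w$ and $w^{-1}$ at their means, expands the product $w\cdot w^{-1}=1$ bilinearly, observes the two cross terms integrate to zero, and bounds the remaining quadratic term by Cauchy--Schwarz together with the standard $L^2$ consequence of John--Nirenberg on a fixed-length interval: $\|f-\bar f\|_{L^2(\mathbb{T})}\lesssim\|f\|_{\mathrm{BMO}}$. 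This is shorter, avoids the level-set bookkeeping, and in fact yields the slightly sharper bound $\|w^{-1}\|_1\lesssim 1+ts$, which trivially implies the stated $\lesssim 1+(1+t)s$. The one point you flag---that the John--Nirenberg constant for the full torus is absolute because $|\mathbb{T}|$ is fixed---is exactly the right thing to be careful about, and is fine.
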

\begin{proof}
Denote  $\|w^{-1}\|_1=M$. Then, by Cauchy-Schwarz inequality,
\[
2\pi\le \|w\|_1^{1/2}\|w^{-1}\|^{1/2}_1=M^{1/2}
\]
On the other hand, by John-Nirenberg estimate for $w^{-1}$,
\[
|\{\theta: |w^{-1}-(2\pi)^{-1}M|>\lambda\}|\lesssim
 \exp\left(-\frac{C\lambda}{s}\right)
\]
Choosing $\lambda=(4\pi)^{-1}M$, we get \begin{equation}\label{ll1}
 |\Omega^c|\lesssim  \exp\left(-\frac{CM}{s}\right)\lesssim
\left(\frac{s}{M}\right)^2,\quad  {\rm where}\quad
\,\Omega=\Bigl\{\theta:\frac{4\pi}{3M}\le w\le \frac{4\pi}{M}\Bigr\}
\end{equation}
Then, $\|w\|_1=1$ and therefore
\[
1=\int_{w\le (4\pi)/M}wd\theta+\int_{w>(4\pi)/M}wd\theta
\]
\begin{equation}\label{ll0}
\int_{w>(4\pi)/M}wd\theta\ge 1-8\pi^2 M^{-1}
\end{equation}
By John-Nirenberg inequality, we have \begin{equation}\label{ll2}
\|w-(2\pi)^{-1}\|_p<Ctp,\quad  p<\infty
\end{equation}
We choose $p=2$ in the last estimate and use Cauchy-Schwarz
inequality in \eqref{ll0} to get
\[1-8\pi^2 M^{-1}\le\int_{w>(4\pi)/M}wd\theta\le \|w\|_2 \cdot|\{\theta: w>4\pi/M\}|^{1/2} \le   \|w\|_2 \cdot|\Omega_c|^{1/2}  \lesssim \frac{(1+t)s}{M}
\]
where we used \eqref{ll1} and \eqref{ll2} for the last bound. So, $
M\lesssim (1+t)s+1. $
\end{proof}

\begin{lemma}\label{ce} For $p\in [2,\infty)$, we have
\[
 \|[w,P_{[1,n]}]\|_{p,p}\lesssim p^2\|w\|_{BMO}
\]
\end{lemma}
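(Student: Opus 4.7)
I would follow the Coifman--Rochberg--Weiss strategy already employed in Lemma \ref{al1}, but track the dependence on $p$ explicitly throughout. Introduce the operator-valued analytic function
\[
F(z) = e^{zw} P_{[1,n]} e^{-zw},
\]
which is weakly holomorphic in a neighborhood of the origin by the John--Nirenberg inequality, and for which $[w, P_{[1,n]}] = F'(0)$. Cauchy's formula then yields
\[
\|[w, P_{[1,n]}]\|_{p,p} \le \frac{1}{\epsilon} \max_{|z|=\epsilon} \|F(z)\|_{p,p},
\]
and the routine change of variables reproduced in the proof of Lemma \ref{al1} identifies $\|F(z)\|_{p,p}$ with $\|P_{[1,n]}\|_{L^p_W \to L^p_W}$, where $W = e^{p\Re(zw)}$.

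The critical quantitative choice is the radius $\epsilon = c_0/(p\|w\|_{BMO})$, with $c_0$ a small absolute constant selected so that $\|p\Re(zw)\|_{BMO} \le c_0$ on $|z| = \epsilon$. The small-BMO $\Rightarrow A_2$ principle cited in Lemma \ref{al1} then gives $[W]_{A_p} \le [W]_{A_2} \le C$ with a constant independent of $p$.

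The heart of the argument is the estimate $\|P_{[1,n]}\|_{L^p_W \to L^p_W} \lesssim p$ for $p \in [2,\infty)$ whenever $[W]_{A_p} \le C$. Splitting $P_{[1,n]}$ via \eqref{proj} into shift-conjugates of $H$ and $P_0$, and using that multiplication by $z^{\pm k}$ is an $L^p_W$-isometry since $|z|=1$ on the circle, I would reduce to bounding $\|H\|_{L^p_W}$ and $\|P_0\|_{L^p_W}$ separately. For $P_0$, the H\"older-type argument reproduced at the end of the proof of Lemma \ref{al1} yields a bound uniform in $p$ once John--Nirenberg is applied to $\int W$ and $\int W^{-p'/p}$ individually. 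For $H$, I would combine the constant-in-$p$ weighted $L^2$ bound with the sharp unweighted estimate $\|H\|_{p,p} = \cot(\pi/(2p)) \sim p$ via Stein--Weiss complex interpolation on weighted $L^p$ spaces (or equivalently invoke a quantitative version of the Hunt--Muckenhoupt--Wheeden theorem) to produce the required linear-in-$p$ bound.

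Assembling the three inputs,
\[
\|[w, P_{[1,n]}]\|_{p,p} \le \epsilon^{-1} \max_{|z|=\epsilon} \|F(z)\|_{p,p} \lesssim \frac{p\|w\|_{BMO}}{c_0} \cdot p \lesssim p^2 \|w\|_{BMO},
\]
which is the claimed inequality. The main obstacle, and the only step where real work is required, is extracting the sharp $p$-dependence of the weighted Hilbert transform norm: the generic bound $C(p) < \infty$ quoted in \eqref{oh} handles any bounded range of $p$ but must be upgraded to a linear-in-$p$ estimate in order to cover all $p \in [2,\infty)$ with the final $p^2 \|w\|_{BMO}$ bound.
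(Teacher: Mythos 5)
Your proposal is a genuinely different argument from the paper's, and it is essentially sound. The paper does not revisit the exponential-conjugation machinery from Lemma \ref{al1} here; instead it reduces (via duality and \eqref{proj}) to the scalar commutators $[w,P_0]$ and $[w,H]$, handles $[w,P_0]$ directly with H\"older and John--Nirenberg, and for $[w,H]$ performs a hands-on Marcinkiewicz-type real interpolation between the Coifman--Rochberg--Weiss $L^2$ bound and Perez's weak $L\log L$ endpoint estimate for commutators, producing $\|[w,H]\|_{p',p'}\lesssim (p'-1)^{-2}$. Your route keeps the complex-analytic/Cauchy-integral representation and pushes the $p$-dependence into a quantitative weighted norm inequality. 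Both are legitimate; yours has the merit of unifying the proof of this lemma with that of Lemma \ref{al1}, while the paper's avoids sharp weighted bounds entirely at the cost of importing a separate endpoint result.

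Where your write-up is thin is exactly where you say the real work lies: the bound $\|H\|_{L^p_W\to L^p_W}\lesssim p$. As stated for an arbitrary $W$ with $[W]_{A_p}\le C$ this is a sharp-constant $A_p$ theorem, which is heavier than anything else in the paper. But for the specific weights you actually meet, namely $W=e^{p\psi}$ with $\psi=\Re(zw)$ and $\|\psi\|_{BMO}\le c_0/p$, the Stein--Weiss interpolation you invoke can be made to deliver the estimate, provided you choose the endpoints with some care: with $p_1=q\sim p$ (say $q=2p$) and $W_1\equiv1$, the intermediate space $L^p(W_\theta)$ has $W_\theta=W_0^{(1-\theta)p/2}$, so the $L^2$ endpoint must be taken with the weight $W_0=e^{2(p-1)\psi}$ rather than $W^{2/p}$; its BMO exponent is $2(p-1)\|\psi\|_{BMO}<2c_0$, which stays below $\epsilon_0$ once $c_0$ is small. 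One then computes $\theta=(p-2)/(p-1)$ and $\|H\|_{L^p_W}\le C^{1-\theta}(Cq)^\theta\lesssim p$, uniformly for $p\in[2,\infty)$. The $P_0$ piece is indeed $p$-uniform by the H\"older/John--Nirenberg computation already in Lemma \ref{al1}. So the proof closes, but the clause ``or equivalently invoke a quantitative version of the Hunt--Muckenhoupt--Wheeden theorem'' undersells what is required: \eqref{oh} as cited is purely qualitative in $p$, and the necessary linear-in-$p$ constant is either the above interpolation (elementary, but requires the exponential structure of $W$) or a genuine sharp $A_p$ theorem. If you spell out the $q\sim p$, $W_0=e^{2(p-1)\psi}$ choice, the argument is complete and gives an alternative proof in the spirit of Lemma \ref{al1}.
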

\begin{proof} The proof is standard but we give it here for
completeness. Assume $\|w\|_{BMO}=1$. By duality and formula
\eqref{proj}, it is sufficient to show that
\begin{equation}\label{upu1}
    \|[w,P_0]\|_{p,p}\le C(p-1)^{-1}, \quad p\in (1,2]
\end{equation}
and
\begin{equation}\label{upu}
    \|[w,H]\|_{p,p}\le C(p-1)^{-2}, \quad p\in (1,2]\,.
\end{equation}
For \eqref{upu1}, we write
\[
\|w\int_{\mathbb{T}} fdx-\int_{\mathbb{T}} wfdx\|_p\le \|f\|_1\|w-\langle w\rangle_{\mathbb{T}}\|_p+\|f\|_p\|w-\langle w\rangle_{\mathbb{T}}\|_{p'}
\]
where
\[
\langle w\rangle_{\mathbb{T}}=\frac{1}{2\pi}\int_{\mathbb{T}} wdx\,.
\]
From  John-Nirenberg theorem, we have 
\[
\|w-\langle w\rangle_{\mathbb{T}}\|_{p'}\lesssim p'\|w\|_{BMO}, \quad p'>2\,,
\]
which proves \eqref{upu1}.
To prove \eqref{upu}, we will interpolate between two bounds: the standard
Coifman-Rochberg-Weiss theorem for $p=2$ (\cite{crw},\cite{stein})
\begin{equation}\label{raz}
\|[H,w]\|_{2,2}\le C
\end{equation}
and the following estimate
\begin{equation}\label{dva}
|\{x:|([H,w]f)(x)|>\alpha\}|\le C \int_{\mathbb{T}}
\frac{|f(t)|}{\alpha}\left(1+\log^+\left(\frac{|f(t)|}{\alpha}\right)\right)dt
\end{equation}
(See \cite{perez}, the estimate was obtained on $\mathbb{R}$  for
smooth $f$ with compact support. The proof, however, is valid for
$\mathbb{T}$ as well and, e.g., piece-wise smooth continuous $f$).
Assume a smooth $f$ is given and denote $\lambda_f(t)=|\{x:
|f(x)|>t\}|, t\ge 0$. Take $A>0$ and consider $f_A=f\cdot
\chi_{|f|\le A}+A\cdot {\rm sgn}f\cdot \chi_{|f|>A}$, $g_A=f-f_A$.
Let $T=[H,w]$. Then,
\[
\|Tf\|_p^p=p\int_0^\infty t^{p-1}\lambda_{Tf}(t)dt\le p\int_0^\infty
t^{p-1}\lambda_{Tf_A}(t/2)dt+ p\int_0^\infty
t^{p-1}\lambda_{Tg_A}(t/2)dt=I_1+I_2
\]
Let $A=t$. From  Chebyshev inequality and \eqref{raz}, we get
\[
I_1\lesssim \int_0^\infty t^{p-3}\|f_A\|_2^2dt=2\int_0^\infty
t^{p-3}\int_0^A \xi \lambda_f(\xi)d\xi dt\lesssim
(2-p)^{-1}\int_0^\infty \xi^{p-1}\lambda_f(\xi)d\xi\lesssim
(2-p)^{-1}\|f\|_p^p
\]
For $I_2$, we use \eqref{dva} (notice that $g_A$ is continuous and
piece-wise smooth)
\[
I_2\lesssim -\int_0^\infty t^{p-1}\int_0^\infty
\frac{\xi}{t}\left(1+\log^+\frac{\xi}{t}\right)d\lambda_{g_A}(\xi)\lesssim
\]
\[
\|f\|_p^p+\int_0^\infty t^{p-1}\int_{2t}^\infty
t^{-1}\Bigl(1+\log^+((\tau-t)/t)\Bigr)\lambda_{f}(\tau)d\tau\lesssim
\|f\|_p^p \int_0^1
\xi^{p-2}\left(1+\log^+\frac{1-\xi}{\xi}\right)d\xi
\]
We have
\[
\int_0^{1/2}
\xi^{p-2}\left(1+\log^+\frac{1-\xi}{\xi}\right)d\xi\lesssim
\int_2^\infty u^{-p}\log u du\lesssim \int_0^\infty e^{-\delta
t}tdt\lesssim \delta^{-2}
\]
with $\delta=p-1$.

\end{proof}

{\Large \part*{Acknowledgement}} The work of SD done in the second
part of the paper was supported by RSF-14-21-00025 and his research
on the rest of the paper was supported by the grant NSF-DMS-1464479.
The research of KR was supported by the RTG grant NSF-DMS-1147523.
\bigskip

\end{document}